\setlist[enumerate]{leftmargin=.5in}
\setlist[itemize]{leftmargin=.5in}
\crefname{hypothesis}{Hypothesis}{Hypotheses}
\title{Random Reshuffling for Stochastic Gradient Langevin Dynamics\thanks{Submitted to the editors \today.
\funding{LS acknowledges support from the Ministerio de Ciencia e Innovaci\'{o}n (Spain) through project PID2022-136585NB-C21, MCIN/AEI/10.13039/501100011033/FEDER, UE.}}}
\author{Luke Shaw\thanks{Departament de Matem\`{a}tiques and IMAC, Universitat Jaume I, 12071-Castell\'{o}n de la Plana, Spain
  (\email{shaw@uji.es}).}
\and Peter A. Whalley\thanks{Seminar for Statistics, Department of Mathematics, ETH Z{\"u}rich, Switzerland
  (\email{peter.whalley@math.ethz.ch})}}
\newcommand{\E}{\mathbb{E}}
\newcommand{\V}{\mathbb{V}}
\newcommand{\R}{\mathbb{R}}
\newcommand*{\horzbar}{\rule[.5ex]{2.5ex}{0.5pt}}
\DeclareMathOperator*{\argmax}{argmax}
\DeclareMathOperator*{\argmin}{argmin}
\newcommand{\mycomment}[1]{}
\newtheorem{assumption}{Assumption} 
\newtheorem{subassumption}{}[assumption]
\begin{document}

\maketitle
\begin{abstract}
    We examine the use of different randomisation strategies for one of the most popular stochastic gradient algorithms used in sampling, Stochastic Gradient Langevin Dynamics, based on first-order (or overdamped) Langevin dynamics. Conventionally, this algorithm is combined with a specific stochastic gradient strategy, called Robbins-Monro. In this work, we study an alternative strategy, Random Reshuffling, and show convincingly that it leads to improved performance via: a) a proof of reduced bias in the Wasserstein metric for strongly convex, gradient Lipschitz potentials; b) an analytical demonstration of reduced bias for a Gaussian model problem; and c) an empirical demonstration of reduced bias in numerical experiments for some logistic regression problems.
    This is especially important since Random Reshuffling is typically more efficient due to memory access and cache reasons. Such acceleration for the Random Reshuffling policy is familiar from the optimisation literature on stochastic gradient descent. 
\end{abstract}

\begin{keywords}
  Stochastic gradient, MCMC, Sampling, Wasserstein convergence, Langevin dynamics
\end{keywords}

\begin{AMS}
  65C05, 82C31, 62F15
\end{AMS}

\section{Introduction}
Within the field of machine learning, one way to train a model is to treat the problem as an optimisation problem (e.g. Maximum Likelihood Estimation, MLE), and to find the optimum of a function $F:\R^d\to\R$ using an optimiser, for example a (full-)gradient-based optimiser such as Gradient Descent (GD). However, models typically (and increasingly) learn from very large datasets, so that such optimisers require a computationally infeasible number of operations over the whole dataset. This has led to the substitution of such `full-gradient' optimisers by stochastic gradient optimisers which use gradients calculated over manageable subsets of data called `batches' such as stochastic gradient descent (SGD) \cite{Robbins1951}. This comes with a penalty however, since the use of stochastic gradients implies that the algorithm, operated with fixed step size $h$, asymptotically remains some distance from the true minimiser of the full gradient, an error which we call the `stochastic gradient bias' \cite{Bach2011}.

The search for better learning has also caused model complexity to grow with increasing dataset size, so that the number of parameters remains large relative to the amount of data, leading to significant uncertainty about optimal parameters. Hence, Bayesian learning approaches, which allow this uncertainty to be quantified as well as leading to models which are more resistant to overfitting, are desirable \cite{welling2011bayesian}.
Bayesian learning then amounts to an instance of the general `sampling problem': the problem of generating samples from a probability measure $\pi_*$ with density $p_*\propto\exp(-F)$, which occurs in other fields such as molecular dynamics \cite{Neal2011,LeimkuhlerBook2015,LeMa13}.

Sampling from this distribution (using e.g. a gradient-based MCMC sampler) would also require computations over the whole dataset, and so it is natural to try to solve the sampling problem via stochastic gradient samplers such as SGLD \cite{welling2011bayesian,Vollmer2016,Ahn2012}, just as one solves the optimisation problem via stochastic gradient optimisers such as SGD. In the search for better stochastic gradient methods one could of course focus on improving the underlying (full-)gradient-based method - e.g. shifting from gradient descent to a momentum-based optimiser \cite{Polyak1964} for optimisation, or from a first-order Langevin sampler to a second-order Langevin sampler or related method for sampling \cite{Chen2014,LeimkuhlerBook2015,Ma2021,Ding2014,LeMa13}. Other modifications of this type are also possible (see \cite{Baker2019,Brosse2018,Chatterji2018} and for a general survey \cite{Nemeth2021}). However, another approach is to fix the sampler/optimiser, and vary the randomisation strategy used to generate the stochastic gradient at each iteration. Within optimisation, this has proven to be an extremely productive approach, both theoretically and experimentally, where the two main strategies are Robbins-Monro (RM) and Random Reshuffling (RR), with the latter typically preferred \cite{Recht2012,Sun2020,Bottou2009}{:
\begin{itemize}
    \item Robbins-Monro: a subset of the data is randomly sampled from the full dataset at each iteration of the algorithm and used to calculate the stochastic gradient.
    \item Random Reshuffling: the full dataset is shuffled and partitioned into $R$ subsets, with the subsets cycled through over $R$ iterations. This reshuffling, repartitioning and cycling is repeated for as many epochs of $R$ iterations as required.
\end{itemize}
}
It is now well-established that RR is not only more computationally efficient (due to caching and memory access \cite{Bengio2012}) but incurs a reduced stochastic gradient bias relative to RM when both are applied to SGD (giving SGD-RR and SGD-RM respectively) \cite{Mishchenko2020,Cha2023}. Since one can apply the same randomisation strategies (RR and RM) to stochastic gradient \emph{samplers}, one is led to ask what relation exists between the two strategies for sampling - currently only the RM strategy has been analysed or even been examined experimentally\footnote{Except for \cite{paulin2024sampling}, which considers second-order Langevin dynamics.} despite the well-recognised connection between optimisation and sampling \cite{Dalalyan2017further,Durmus2019}. In this work we consider the Euler-Maruyama sampler for the overdamped Langevin equation (called `ULA' \cite{Roberts1996}), which when used with a stochastic gradient is known as the Stochastic Gradient Langevin Dynamics sampler (SGLD) \cite{welling2011bayesian}. For this sampler, we show that, just as for optimisation, RR (SGLD-RR) improves on RM (SGLD-RM), in three ways:
\begin{itemize}
    \item Under standard assumptions on $\pi_*$, we achieve state of the art non-asymptotic guarantees for SGLD-RR which improve upon known results for SGLD-RM in \cref{sec:Convergence}. In particular, to reach an accuracy of $\epsilon > 0$ in Wasserstein-2 distance our results for SGLD-RR require $\mathcal{O}(1/\epsilon)$ steps compared to the best-known bound of $\mathcal{O}(1/\epsilon^{2})$ for SGLD-RM.
    \item We show analytically for a toy Gaussian problem that the bias of SGLD-RR is indeed less than that of SGLD-RM in \cref{sec:ModelProblem}.
    \item We show numerically for some logistic regression problems from \cite{Casas2022} that, empirically, the bias of SGLD-RR is indeed less than that of SGLD-RM in \cref{sec:Experiments}.
\end{itemize}

\section{Preliminaries}
In the standard machine learning problem, one assumes a model param-etrised by some parameters $X\in\R^d$ for a dataset $Y\equiv\{y_i\}_{i=1}^N$. This parametrisation is encoded via a likelihood per observation $\ell(\cdot;y_i):\mathbb{R}^{d} \to \mathbb{R}$, so that the overall likelihood of the dataset is $\mathcal{L}: \mathbb{R}^{d} \to \mathbb{R}$ defined by $\mathcal{L}(\cdot;Y)=\prod_{i=1}^N\ell(\cdot;y_i)$. One could then simply perform MLE using some optimisation algorithm to arrive at a more or less `good' parameter value.

On the other hand, within a Bayesian framework, one incorporates prior beliefs about the `true' parameters (e.g. sparsity) via a prior with density $\nu_0$ to give a posterior $\pi_*$ with density defined for $X\in \mathbb{R}^{d}$ by $p_*(X)\propto \nu_0(X)\times\mathcal{L}(X;Y)$, i.e.,
\begin{equation}\label{eq:posterior}
p_*(X)=\nu_0(X)\prod_{i=1}^{N}\ell(X;y_i)\propto\exp\left(-\frac{1}{N}\sum_{i=1}^Ng_i(X)\right)=\exp\left(-F(X)\right),
\end{equation}
where $g_i(X)=-N\log(\ell(X;y_i))-\log(\nu_{0}(X))$.

The most likely parameter value is then the Maximum A Posteriori (MAP) parameter estimate i.e. the minimum of the negative log-posterior (assuming there is a unique minimum), equal to $F$ up to an unimportant constant,
\begin{equation}
X_*=\argmax_{X\in \mathbb{R}^{d}} p_*(X)= \argmin_{X\in \mathbb{R}^{d}} F(X).
\end{equation}
This can be obtained via a gradient-based optimisation algorithm such as GD which, along with step size $h$ and starting value $x_0$, is defined by the update rule
\begin{equation}\label{eq:GD}\tag{GD}
    x_{k+1}\gets x_k-h\nabla F(x_k),
\end{equation}
which under appropriate assumptions on $h$ and $F$ converges to the MAP $\lim_{K\to\infty}x_K=X_*$ \cite{NesterovBook}. However, for $N$ large, this is too costly per iteration, and so one replaces the gradient $\nabla F$ in \cref{eq:GD} with a \emph{stochastic gradient} $\widehat{\nabla F}$, which is an unbiased estimator of $\nabla F$ with some reduced cost $n\ll N$. This gives the update rule for the SGD algorithm
\begin{equation}\label{eq:SGD}\tag{SGD}
\begin{aligned}
x_{k+1}&\gets x_k-h\widehat{\nabla F}(x_k),
\end{aligned}
\end{equation}
naturally incurring some error, so that $\lim_{K\to\infty}x_K$ is only `close' to $X_*$ (see \cite{Gower2019,Bottou2004}).

However, the real strength of the Bayesian approach is that using  Bayesian learning (e.g for Bayesian Neural Networks) one may quantify uncertainty about the parameter estimates and also avoid overfitting (using ensemble models as in e.g. \cite{Ahn2012}) via sampling from the posterior $\pi_*$ \cite{NealBNNs,Mackay1992}.  
Since there is always some uncertainty regarding the MAP with respect to changes in e.g. slightly different data or prior beliefs, especially in the case of the very large models with huge numbers of parameters that are popular today, this is a rather powerful advantage. 

This sampling can be accomplished via taking the overdamped or first-order Langevin equation for $X_t\in\mathbb{R}^{d}$ associated to the potential $F$
\begin{equation}\label{eq:ODLangevin}
    dX_t=-\nabla F(X_t)dt+\sqrt{2}dW_t,
\end{equation}
where $W_{t}$ is a $d$-dimensional standard Brownian motion. Under mild conditions, it can be shown that \cref{eq:ODLangevin} has invariant measure $\pi_{*}$ associated with the density $p_*\propto\exp(-F)$ (see \cite[Proposition 4.2]{pavliotis2014stochastic}). In practice these dynamics cannot be solved exactly, so one needs to discretise the dynamics. Discretising it using the Euler-Maruyama scheme gives the `Unadjusted Langevin Algorithm' (ULA), defined by the update step \cite{Durmus2017,Roberts1996}
\begin{equation}\label{eq:GLD}\tag{ULA}
    x_{k+1}\gets x_k-h\nabla F(x_k)+\sqrt{2h}\xi_k,
\end{equation}
along with step size $h>0$ and initialisation $x_0\in \mathbb{R}^{d}$, and where $\xi_k\sim\mathcal{N}(0_{d},I_{d})$ are independent and identically distributed. Denote the distribution of samples generated after $k$ iterations of \cref{eq:GLD} as $\widetilde{\pi}_k^{[ULA]}$. Under appropriate assumptions, the limiting distribution of the samples $\lim_{k\to\infty}\widetilde{\pi}_k^{[ULA]}\equiv\widetilde{\pi}_{\infty}^{[ULA]}$ is $\mathcal{O}(h)$ close to the true distribution $\pi_*$ in Wasserstein distance \cite{Durmus2017,Durmus2019,Dalalyan2017further}.

However as was the case in the optimisation setting, when $N$ is large, one often chooses to use cheaper approximations of the gradient in  \cref{eq:GLD}, giving the SGLD algorithm update step \cite{welling2011bayesian}
\begin{equation}\label{eq:SGLD}\tag{SGLD}
\begin{aligned}
x_{k+1}&\gets x_k-h\widehat{\nabla F}(x_k)+\sqrt{2h}\xi_k,
\end{aligned}
\end{equation}
incurring an additional error so that the limiting distribution of the samples generated by \cref{eq:SGLD} $\widetilde{\pi}_{\infty}^{[SGLD]}$ is even further from $\pi_*$ than $\widetilde{\pi}_{\infty}^{[ULA]}$ is \cite{DalalyanSG}.

We note in passing that, even when not using the Bayesian framework and simply directly minimising a loss function $F:\R^d\to \R$, as is the case in the classic machine learning problem, sampling-type algorithms can still be useful - especially for non-convex potentials - since the samples concentrate around the minimiser of $F$ \cite{Zhang2023, Dalalyan2017further,DalalyanSG,Xu2018,Raginsky2017}.

\subsection{Randomisation Strategies}\label{sec:randomisation_strategies}
We now turn to how one generates the stochastic gradient estimator $\widehat{\nabla F}$ in \cref{eq:SGD,eq:SGLD}, following terminology from \cite{BertsekasBook,Mishchenko2020}. A review of results for randomisation strategies for optimisation with stochastic gradient descent can be found in \cite{Cha2023}.
Recall that $\nabla F$ is based on a finite sum
 \begin{equation}\label{eq:FiniteSum}
 \nabla F=\frac{1}{N}\sum_{i=1}^{N}\nabla g_i.
  \end{equation}
 The stochastic gradient optimisers and samplers \cref{eq:SGD,eq:SGLD} implemented with policies RR and RM may then be understood as special cases of the same algorithm \cref{alg:GenAlg}.
 One generates a matrix $\Omega_{n,N}(m)\in\R^{m\times n}$
 \begin{equation}\label{eq:OmegaMatrix}
     \Omega_{n,N}(m)=\begin{bmatrix}
         \omega_{11} & \omega_{12} & \ldots &  \omega_{1n}\\
         \omega_{21} & \ddots & \ldots &  \omega_{2n}\\
         \vdots & \vdots & \ddots &  \vdots\\
        \omega_{m1} & \ldots & \ldots &  \omega_{mn}
     \end{bmatrix}
     =\begin{bmatrix}
          \horzbar & \boldsymbol{\omega}_{1} &\horzbar \\
         \horzbar & \boldsymbol{\omega}_{2} &\horzbar\\
          & \vdots &\\
        \horzbar & \boldsymbol{\omega}_{m} &\horzbar\\
     \end{bmatrix},
 \end{equation}
 of $mn$ distinct entries $\omega_{ij}$ drawn without replacement\footnote{One could also sample with replacement \cite{Vollmer2016}.} from $\{1,2,\ldots,N\}$. We will often suppress the additional notation for brevity\footnote{{However, $\boldsymbol{\omega}$, with or without a subscript, will always refer to a vector of length $n$ drawn uniformly without replacement from $\{1,...,N\}$; $\omega$, with or without subscripts, will always refer to an integer from the set $\{1,\ldots, N\}$; and $\Omega$, with or without subscripts, will refer to a matrix, as defined above.}}, writing simply $\Omega$ in the following. 
 For the case $mn=N$, one has maximal $m=R=N/n$ (we assume $n$ divides $N$ exactly for simplicity), and the matrix exactly partitions the set of indices. A batch is then defined as the set of indices corresponding to an $\boldsymbol{\omega}_{i}=(\omega_{i1},\omega_{i2}, \ldots,\omega_{in})$, and gives a stochastic gradient via
 \begin{equation}\label{eq:stochgrad}
      \widehat{\nabla F}=\nabla f_{\boldsymbol{\omega}_i},\quad \nabla f_{\boldsymbol{\omega}_i}\equiv\frac{1}{n}\sum_{j=1}^{n}\nabla g_{\omega_{ij}}.
 \end{equation}
Note that no two batches in the same matrix then contain the same $\nabla g_i$. 

We may now define the aforementioned strategies for generating the stochastic gradient, RM and RR. Stochastic gradient algorithms are typically run for a total number of iterations which is an integer multiple of $R$,  $K=n_eR,n_e\in\mathbb{N}$. $n_e$ is then the \emph{number of epochs}, where an epoch is composed of $R$ iterations.

\paragraph{Robbins-Monro (RM)} For RM, one sets $m=1$ and the stochastic gradient is then generated at each step $k$ via sampling $\Omega\in\R^{1\times n}$ and then calculating $\widehat{\nabla F}$ via \cref{eq:stochgrad}.
In expectation then, after one epoch of $R$ iterations, the algorithm sees every $\nabla g_i$ once \cite{Robbins1951}.

\paragraph{Random Reshuffling (RR)}
In the case of RR, one sets $m=R$ first, samples $\Omega\in\R^{R\times n}$ and then calculates $\widehat{\nabla F}$  via \cref{eq:stochgrad} with $\boldsymbol{\omega}_1$. Over the succeeding $R-1$ steps one iterates through the remaining batches $\boldsymbol{\omega}_2,\ldots,\boldsymbol{\omega}_R$, before reaching the end of the epoch. One then resamples the matrix $\Omega$ and carries out another $R$ steps. Hence, after one epoch of $R$ iterations, the algorithm has seen every $\nabla g_i$ once and only once.
For an intuition as to why RR is superior to RM, see \cite[Exercise 2.10]{BertsekasBook}.

\begin{algorithm}[H]
    \textbf{Input}: $m,K,N,n,x_0,h$ \Comment{$m=1$ if RM, $m=R=N/n$ if RR}
    \caption{General Stochastic Gradient Algorithm}\label{alg:GenAlg}
    \begin{algorithmic}[1]
    \State $k\gets 0$
    \For{$l=1,\ldots, K/m$} \Comment{In practice $K=n_eR$, $n_e\in\mathbb{N}$}
        \State Sample $\Omega_{n,N}(m)=\{\boldsymbol{\omega}_i\}_{i=1}^m$ according to \cref{eq:OmegaMatrix} 
        \For{$i=1,\ldots,m$}
            \State Calculate $\widehat{\nabla F}\gets\nabla f_{\boldsymbol{\omega}_i}$ via \cref{eq:stochgrad}
            \State Generate $x_{k+1}$ using $h\widehat{\nabla F}(x_k)$ via \cref{eq:SGD} or \cref{eq:SGLD}
            \State $k\gets k+1$
        \EndFor
    \EndFor
    \State \Return{$(x_k)_{k=0}^K$}
    \end{algorithmic}
\end{algorithm}

\paragraph{Variance of $\widehat{\nabla F}$} As mentioned, the stochastic gradient estimator is unbiased\footnote{Note that for conditional expectations based on previous steps RM remains unbiased i.e. $\mathbb{E}[\widehat{\nabla F}(x_k)|x_k]=\nabla F(x_k)$. This is not the case for RR \cite{Mishchenko2020}.} for both RR and RM, $\mathbb{E}[\widehat{\nabla F}]=\nabla F$ but it has a variance, $\sigma^2_*$. This variance is the source of the stochastic gradient bias which affects both SGD and SGLD. We show presently that $\sigma_*^2$ grows like $R$ {(cf. \cite[Prop. 3.10]{Gower2019})}, if one assumes that the variance can in fact be bounded.
\begin{lemma}[$\sigma_*^2\propto R$]\label{lem:sigmaK}
The variance of the stochastic gradient estimator follows
\begin{align*}
\sigma_*^2\equiv \E_{X\sim q,\boldsymbol{\omega}}\left\|\nabla f_{\boldsymbol{\omega}}(X)-\nabla F(X)\right\|^2=\frac{R-1}{N-1}C_G,\\
\intertext{where {$\boldsymbol{\omega}$ is a random vector of length $n$ drawn uniformly without replacement from $\{1,...,N\}$}, }
C_G=N^{-1}\sum_{i=1}^N\mathbb{E}_{X\sim q}\|\nabla g_i(X)-\nabla F(X)\|^2<\infty,
\end{align*}
and $q=\pi_*$ for the SGLD variance and $q=\delta_{X_*}$ for the SGD variance.
\end{lemma}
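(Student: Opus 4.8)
The plan is to reduce everything to a fixed point $X$ and then carry out a standard second-moment calculation for sampling without replacement. First I would fix $X$ and introduce the centered per-datum gradients $v_i := \nabla g_i(X) - \nabla F(X)$. Because $\nabla F = N^{-1}\sum_{i=1}^{N}\nabla g_i$ is exactly the empirical mean (cf. \cref{eq:FiniteSum}), these satisfy the crucial identity $\sum_{i=1}^{N} v_i = 0$. Since the batch gradient $\nabla f_{\boldsymbol{\omega}}$ in \cref{eq:stochgrad} is a symmetric sum over the entries of $\boldsymbol{\omega}$, only the unordered index set $S := \{\omega_1,\dots,\omega_n\}$ matters, which is a uniformly random $n$-subset of $\{1,\dots,N\}$; the batch error is then $\nabla f_{\boldsymbol{\omega}}(X) - \nabla F(X) = n^{-1}\sum_{i\in S} v_i$.

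Next I would expand the squared norm into diagonal and off-diagonal terms,
\[
\Big\| \tfrac{1}{n}\sum_{i\in S} v_i \Big\|^2 = \frac{1}{n^2}\Big( \sum_{i\in S}\|v_i\|^2 + \sum_{\substack{i,j\in S\\ i\neq j}}\langle v_i,v_j\rangle\Big),
\]
and take the expectation over $\boldsymbol{\omega}$ using the inclusion probabilities for sampling without replacement: $\E[\mathbf{1}\{i\in S\}] = n/N$ and $\E[\mathbf{1}\{i,j\in S\}] = \tfrac{n(n-1)}{N(N-1)}$ for $i\neq j$. This yields $\E_{\boldsymbol{\omega}}\|\cdots\|^2 = n^{-2}\big(\tfrac{n}{N}\sum_i\|v_i\|^2 + \tfrac{n(n-1)}{N(N-1)}\sum_{i\neq j}\langle v_i,v_j\rangle\big)$.

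The key simplification invokes the zero-sum identity: $\sum_{i\neq j}\langle v_i,v_j\rangle = \|\sum_i v_i\|^2 - \sum_i\|v_i\|^2 = -\sum_i\|v_i\|^2$. Writing $V := \sum_i\|v_i\|^2$ and collecting its coefficient gives $\tfrac{V}{nN}\cdot\tfrac{N-n}{N-1}$. Substituting $N=nR$, so that $N-n = n(R-1)$, collapses this to $\tfrac{R-1}{N(N-1)}V$. Finally I take the expectation over $X\sim q$, using $\E_{X\sim q}V = \sum_i \E_{X\sim q}\|v_i\|^2 = N\,C_G$, to conclude $\sigma_*^2 = \tfrac{R-1}{N-1}C_G$; the assumption $C_G<\infty$ guarantees all the expectations exchange legitimately.

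There is no deep obstacle here — the argument is elementary — so the only real care is bookkeeping. The one point that must be handled correctly is the second-order inclusion probability $\tfrac{n(n-1)}{N(N-1)}$ for sampling without replacement (rather than $(n/N)^2$ for with-replacement): it is precisely this factor, together with the zero-sum cancellation of the cross terms, that produces the $N-n$ in the numerator and hence the eventual $R-1$ rather than $R$. I would also flag explicitly that reducing the ordered vector $\boldsymbol{\omega}$ to the unordered set $S$ is justified by the symmetry of the batch sum, so no factor from orderings is lost.
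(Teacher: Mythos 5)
Your proof is correct and follows essentially the same route as the paper: the paper also reduces to the centered per-datum gradients and obtains the factor $\frac{N-n}{n(N-1)}$ for the variance of a without-replacement sample mean, the only difference being that it cites this identity from Lemma 1 of Mishchenko et al.\ rather than deriving it from the inclusion probabilities and the zero-sum cancellation as you do. Your inline derivation is a faithful, self-contained version of exactly that cited computation, and the final bookkeeping ($N=nR$, $\E_{X\sim q}V = NC_G$) matches the paper's.
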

\begin{proof}
Using that $\nabla f_{\boldsymbol{\omega}}=\frac{1}{n}\sum_{j=1}^n\nabla g_{\omega_{j}}$, {(where $\omega_{j}$ are the components of the vector $\boldsymbol{\omega}$)} one has 
$$\E_{X\sim q,\boldsymbol{\omega}}\left\|f_{\boldsymbol{\omega}}(X)-\nabla F(X)\right\|^2=\E_{X\sim q,\boldsymbol{\omega}}\left\|\frac{1}{n}\sum_{j=1}^{n}\nabla g_{\omega_{j}}(X)-\nabla F(X)\right\|^2.$$ 
Since $\mathbb{E}_{\omega_j}[\nabla g_{\omega_j}-\nabla F]=\frac{1}{N}\sum_{i=1}^N\nabla g_i-\nabla F=0$ {(as $\omega_j$ are unconditionally sampled from $\{1,\ldots, N\}$ uniformly)}, one may then apply the result \cite[Lemma 1]{Mishchenko2020} to give
\begin{align*}
\E_{X\sim q}\mathbb{E}_{\boldsymbol{\omega}}\left\|\frac{1}{n}\sum_{j=1}^{n}\nabla g_{\omega_{j}}(X)-\nabla F(X)\right\|^2&\overset{{\textnormal{\cite[Lemma 1]{Mishchenko2020}}}}{=}\frac{N-n}{n(N-1)}\E_{X\sim q,\omega_j}\left\|\nabla g_{\omega_{j}}(X)-\nabla F(X)\right\|^2\\
&=\frac{R-1}{N-1}C_G.
\end{align*}
\end{proof}

Although the result of \cref{lem:sigmaK} is illuminating, in order for the proofs in later sections to be valid, all that is required is boundedness of the variance, which we enforce via the following assumption.
\begin{assumption}[Moments of Stochastic Gradient]\label{assum:stochastic_gradient}
We assume that
$$
\sigma_*^2\equiv \E_{X\sim q,\boldsymbol{\omega}}\left\|\nabla f_{\boldsymbol{\omega}}(X)-\nabla F(X)\right\|^2<\infty,
$$
where $q=\pi_*$ for the SGLD variance and $q=\delta_{X_*}$ for the SGD variance.
\end{assumption}

\section{Convergence guarantees for stochastic gradient algorithms}\label{sec:Convergence}
For the optimisation problem, it was well-known for many years that SGD-RR typically performed much better than SGD-RM and this superiority has now been placed on a firm theoretical footing (see \cite{Mishchenko2020,Cha2023} and references), and so one can now state two companion theorems for optimisation with SGD-RM (\cref{thm:SGDRM}) and SGD-RR (\cref{thm:SGDRR}). 

The understanding of convergence of SGLD(-RM/-RR) is somewhat less mature, and, to the best of our knowledge, very little experimental or theoretical study has been made of the differences between RM and RR. The key results for convergence of SGLD-RM may be found in \cite{DalalyanSG}, and so one is able to provide the sampling analogue SGLD-RM (\cref{thm:SGLDRM}) to the optimisation case. In this work we prove the complementary theorem for SGLD-RR (\cref{thm:SGLDRR}), filling the gap suggested by the optimisation companion theorems.

\subsection{Assumptions} 

In the following we make strong assumptions on $F$ and $f_{\boldsymbol{\omega}}$ for all $\boldsymbol{\omega} \subset \{1,2,...,N \}$ such that $|\boldsymbol{\omega}| = n<N$. These strong assumptions allow for the quantitative convergence guarantees that are available in the optimisation and sampling literature.

\begin{definition}\label{def:Lsmooth}
A continuously differentiable $f:\mathbb{R}^{d} \to \mathbb{R}$ is $L$-smooth if  for all $x,y\in \mathbb{R}^{d}$ {, with $L>0$,}
$$
\lVert \nabla f(x) - \nabla f(y)\rVert \le L \lVert x-y\rVert.
$$
\end{definition}
\begin{definition}\label{def:muconvex}
A continuously differentiable $f:\mathbb{R}^{d} \to \mathbb{R}$ is $\mu$-strongly convex if for all $x,y \in \mathbb{R}^{d}$ {, with $\mu>0$,}
$$
f(x) \geq f(y)+\langle \nabla f(y),x-y\rangle + \frac{\mu}{2} \lVert x-y\rVert^2.
$$
Further, $f$ is {(merely)} convex if it is $0$-strongly convex.
\end{definition}

\begin{definition}\label{def:L1HessianLipschitz}
A twice continuously differentiable $f:\mathbb{R}^{d} \to \mathbb{R}$ has a $L_{1}$-smooth Hessian if for all $x,y\in \mathbb{R}^{d}${, with $L_1>0$,}
\[
\|\nabla^{2}f(x) - \nabla^{2}f(y)\| \leq L_{1}\|x-y\|.
\]
\end{definition}
\begin{remark}
    It is well known that if a function $f:\mathbb{R}^{d} \to \mathbb{R}$ is twice continuously differentiable, $L$-smooth and $\mu$-strongly convex then for all $x \in \mathbb{R}^{d}$
    \[
    \mu I_{d} \prec \nabla^{2}f(x) \prec L I_{d}.
    \]
\end{remark}

\begin{assumption}\label{assum:smoothness}
For some positive constants $\mu,L \in \mathbb{R}_{+}$ we assume the potential $F:\mathbb{R}^{d} \to \mathbb{R}$ is of the form \cref{eq:FiniteSum} and is continuously differentiable, $L$-smooth and $\mu$-strongly convex. Further assume that for any $\boldsymbol{\omega} \subset \{1,2,...,N \}$ such that $|\boldsymbol{\omega}| = n<N$, $f_{\boldsymbol{\omega}}:\mathbb{R}^{d}\to\mathbb{R}$ defined by \cref{eq:stochgrad} are continuously differentiable and $L$-smooth.
\begin{subassumption} 
In addition to the above, assume $f_{\boldsymbol{\omega}}:\mathbb{R}^{d}\to\mathbb{R}$ are convex.\label{assum:smoothnessA}
\end{subassumption}
\begin{subassumption}
    In addition to the above, assume $f_{\boldsymbol{\omega}}:\mathbb{R}^{d}\to\mathbb{R}$ are $\mu$-strongly convex.\label{assum:smoothnessB}
\end{subassumption}
\end{assumption}

\begin{assumption}\label{assum:additional_smoothness}
The potential $F:\mathbb{R}^{d} \to \mathbb{R}$ is twice continuously differentiable and has a $L_{1}$-smooth Hessian. 
\end{assumption}

\subsection{SGD}
If one seeks the minimiser $X_*\in \mathbb{R}^{d}$ of a function $F:\mathbb{R}^{d} \to \mathbb{R}$, it may be shown that the infinite-iteration final iterate $\lim_{K\to\infty}x_K$ of \cref{eq:SGD} converges to a point close to the minimiser, and that this point is closer to $X_*$ if RR is used instead of RM.
\begin{theorem}[SGD-RM]\label{thm:SGDRM}
For a function $F:\mathbb{R}^{d} \to \mathbb{R}$ satisfying \cref{assum:smoothness}\cref{assum:smoothnessA} with minimiser $X_{*} \in \mathbb{R}^{d}$ consider iterate $x_K$ for $K\in \mathbb{N}$ generated by SGD-RM via \cref{eq:SGD} with objective $F$, stepsize $0<h<1/(2L)$ and starting iterate $x_0\in\R^d$. Assume the stochastic gradients satisfy \cref{assum:stochastic_gradient} with constant $0<\sigma^{2}_{*}<\infty$  then
$$
\E\|x_K-X_*\|^2\leq(1-h\mu)^K\|x_0-X_*\|^2+2h\frac{\sigma_*^2}{\mu}.
$$
\begin{proof}
   Follows from \cite[Proof of Thm. 3.1]{Gower2019} or \cite[Thm. 5.8]{Garrigos2023}.
\end{proof}
\end{theorem}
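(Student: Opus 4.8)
The plan is to establish a one-step contraction inequality and then unroll it as a geometric recursion. First I would write the SGD-RM update as $x_{k+1} = x_k - h\nabla f_{\boldsymbol{\omega}_k}(x_k)$, where $\boldsymbol{\omega}_k$ is the batch freshly sampled at step $k$, expand the squared distance to the minimiser
\begin{equation*}
\|x_{k+1} - X_*\|^2 = \|x_k - X_*\|^2 - 2h\langle \nabla f_{\boldsymbol{\omega}_k}(x_k), x_k - X_*\rangle + h^2\|\nabla f_{\boldsymbol{\omega}_k}(x_k)\|^2,
\end{equation*}
and take the conditional expectation given $x_k$. Here I would use the fact that RM is conditionally unbiased, $\E[\nabla f_{\boldsymbol{\omega}_k}(x_k)\mid x_k] = \nabla F(x_k)$, so the cross term becomes $-2h\langle \nabla F(x_k), x_k - X_*\rangle$, which I bound below by $\mu$-strong convexity of $F$ through
\begin{equation*}
\langle \nabla F(x_k), x_k - X_*\rangle \ge F(x_k) - F(X_*) + \tfrac{\mu}{2}\|x_k - X_*\|^2.
\end{equation*}

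The key step, and the main obstacle, is controlling the second moment $\E[\|\nabla f_{\boldsymbol{\omega}_k}(x_k)\|^2\mid x_k]$. I would split the gradient as $\nabla f_{\boldsymbol{\omega}_k}(x_k) = (\nabla f_{\boldsymbol{\omega}_k}(x_k) - \nabla f_{\boldsymbol{\omega}_k}(X_*)) + \nabla f_{\boldsymbol{\omega}_k}(X_*)$ and apply $\|a+b\|^2\le 2\|a\|^2 + 2\|b\|^2$. For the first piece I would invoke the co-coercivity inequality for $L$-smooth \emph{convex} functions, which is exactly where \cref{assum:smoothnessA} is needed, namely $\|\nabla f_{\boldsymbol{\omega}}(x) - \nabla f_{\boldsymbol{\omega}}(X_*)\|^2 \le 2L\,(f_{\boldsymbol{\omega}}(x) - f_{\boldsymbol{\omega}}(X_*) - \langle \nabla f_{\boldsymbol{\omega}}(X_*), x - X_*\rangle)$; taking expectation over $\boldsymbol{\omega}$ and using $\E_{\boldsymbol{\omega}} f_{\boldsymbol{\omega}} = F$ together with $\E_{\boldsymbol{\omega}}\nabla f_{\boldsymbol{\omega}}(X_*) = \nabla F(X_*) = 0$ collapses the Bregman term to $F(x_k) - F(X_*)$. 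For the second piece, since $\nabla F(X_*) = 0$, \cref{assum:stochastic_gradient} identifies $\E_{\boldsymbol{\omega}}\|\nabla f_{\boldsymbol{\omega}}(X_*)\|^2 = \sigma_*^2$. Together this yields $\E[\|\nabla f_{\boldsymbol{\omega}_k}(x_k)\|^2\mid x_k] \le 4L(F(x_k)-F(X_*)) + 2\sigma_*^2$.

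Assembling the three bounds, the functional-suboptimality contributions combine into $-2h(1-2hL)(F(x_k)-F(X_*))$, and here the stepsize restriction $h < 1/(2L)$ does its work: the prefactor $1-2hL$ is positive and $F(x_k)-F(X_*)\ge 0$ since $X_*$ minimises $F$, so this entire term may be discarded. What survives is the clean one-step recursion
\begin{equation*}
\E[\|x_{k+1} - X_*\|^2 \mid x_k] \le (1-h\mu)\|x_k - X_*\|^2 + 2h^2\sigma_*^2.
\end{equation*}
Finally I would take total expectations, iterate the recursion $K$ times, and bound the resulting geometric sum $2h^2\sigma_*^2\sum_{j=0}^{K-1}(1-h\mu)^j$ by its infinite-series value $2h^2\sigma_*^2/(h\mu) = 2h\sigma_*^2/\mu$, which is valid because $h\mu \le hL < 1/2 < 1$ ensures $0<1-h\mu<1$. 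This delivers the stated bound. The only genuinely delicate point is the convexity-based second-moment estimate: without convexity of the batch functions one cannot cancel the $F(x_k)-F(X_*)$ term and the recursion would not close.
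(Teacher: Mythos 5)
Your proof is correct and is essentially the same argument as the one the paper cites (the expected-smoothness/co-coercivity proof of \cite[Thm.~3.1]{Gower2019}, reproduced as \cite[Thm.~5.8]{Garrigos2023}): one-step expansion, conditional unbiasedness of RM, strong convexity for the cross term, co-coercivity of the convex $L$-smooth batch functions plus $\nabla F(X_*)=0$ for the second moment, and unrolling the geometric recursion. No gaps; the stepsize condition $h<1/(2L)$ is used exactly where you say it is.
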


\begin{theorem}[SGD-RR]\label{thm:SGDRR}
For a function $F:\mathbb{R}^{d} \to \mathbb{R}$ satisfying \cref{assum:smoothness}\cref{assum:smoothnessB} with minimiser $X_{*} \in \mathbb{R}^{d}$ consider iterate $x_K$ generated by an integer number of epochs of SGD-RR (i.e. $K=n_{e}R$, $n_e,R\in\mathbb{N}$) via \cref{eq:SGD} with objective $F$, stepsize $0<h<1/L$ and starting iterate $x_0\in\R^d$. Assume the stochastic gradients satisfy \cref{assum:stochastic_gradient} with constant $0<\sigma^{2}_{*}<\infty$ then
$$
\E\left\| x_K-X_*\right\|^2\leq(1-h\mu)^{K}\left\| x_0-X_*\right\|^2+h^2\frac{LR}{2\mu}\sigma_*^2.
$$
\begin{proof}
    Given in \cite[Thm. 1]{Mishchenko2020}.
\end{proof}
\end{theorem}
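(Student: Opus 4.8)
The plan is to analyse the recursion epoch-by-epoch rather than step-by-step. The essential difficulty, flagged already in the footnote of \cref{sec:randomisation_strategies}, is that a single RR step is \emph{not} conditionally unbiased, $\E[\widehat{\nabla F}(x_k)\mid x_k]\neq\nabla F(x_k)$, so the one-step descent argument underpinning \cref{thm:SGDRM} is unavailable. What rescues the analysis is that over a complete epoch the batches $\boldsymbol{\omega}_1,\dots,\boldsymbol{\omega}_R$ form an exact partition of the data, so the \emph{average} of the component gradients at any fixed point is the full gradient,
\begin{equation*}
\tfrac{1}{R}\sum_{i=1}^{R}\nabla f_{\boldsymbol{\omega}_i}(x)=\frac{1}{N}\sum_{k=1}^{N}\nabla g_k(x)=\nabla F(x).
\end{equation*}
I would therefore fix an epoch, write $x_0$ for its first iterate and $x_R$ for its last (so $x_R$ begins the next epoch), aim to establish a one-epoch recursion
\begin{equation*}
\E\|x_R-X_*\|^2\le (1-h\mu)^{R}\,\E\|x_0-X_*\|^2 + B,\qquad B=\mathcal{O}(h^3R^2\sigma_*^2),
\end{equation*}
and then unroll it over the $n_e=K/R$ epochs.

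For the within-epoch contraction I would track the iterates one step at a time, taking expectations only after the whole epoch has been assembled. Writing a single step as
\begin{equation*}
x_{i+1}-X_*=\bigl(x_i-X_*-h\,(\nabla f_{\boldsymbol{\omega}_i}(x_i)-\nabla f_{\boldsymbol{\omega}_i}(X_*))\bigr)-h\,\nabla f_{\boldsymbol{\omega}_i}(X_*),
\end{equation*}
the bracketed term is a gradient step on the $\mu$-strongly convex, $L$-smooth function $f_{\boldsymbol{\omega}_i}$; here \cref{assum:smoothnessB} is exactly what is needed, in contrast with the merely convex components \cref{assum:smoothnessA} used for SGD-RM, and it is what permits the larger stepsize $h<1/L$. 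Standard co-coercivity then yields the deterministic contraction of the bracketed term by the factor $(1-h\mu)$ when $0<h<1/L$, at the expense of the residual noise $h\,\nabla f_{\boldsymbol{\omega}_i}(X_*)$ and a cross term. Because $\sum_{i=1}^{R}\nabla f_{\boldsymbol{\omega}_i}(X_*)=R\,\nabla F(X_*)=0$, these noise contributions cancel in mean to first order, and only a second-order-in-$h$ residual survives to produce the bias.

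The heart of the argument — and the step I expect to be the main obstacle — is quantifying that surviving residual. Two complications combine here. First, because the per-step contraction weights the noise term from step $i$ by $(1-h\mu)^{R-1-i}$, the cancellation $\sum_i\nabla f_{\boldsymbol{\omega}_i}(X_*)=0$ is \emph{not} exact for the weighted sum, so one must control the difference between the weighted and unweighted sums. Second, the cross term involves $\nabla f_{\boldsymbol{\omega}_i}(x_i)-\nabla f_{\boldsymbol{\omega}_i}(X_*)$, whose size is governed via $L$-smoothness by the drift $\|x_i-X_*\|$ of the within-epoch iterates. I would handle both by (i) establishing a drift bound $\E\sum_{i}\|x_i-x_0\|^2=\mathcal{O}(h^2R^2\sigma_*^2)$, obtained by feeding the noise estimate back into the unrolled recursion, and (ii) controlling the variance of the shuffled partial sums through a sampling-\emph{without}-replacement computation of exactly the type already carried out in \cref{lem:sigmaK} (equivalently \cite[Lemma 1]{Mishchenko2020}). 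It is this finite-population variance that injects the factor $R$: the reshuffling variance of a partial sum of up to $R$ mean-zero terms scales like $R\sigma_*^2$. Assembling (i) and (ii) by Cauchy--Schwarz, with the factor $L$ entering through the smoothness bound on the gradient differences, gives the per-epoch bias $B$ with the correct constant.

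Finally I would unroll the one-epoch recursion over the $n_e$ epochs. The contraction factors multiply to $(1-h\mu)^{n_eR}=(1-h\mu)^{K}$, while the bias terms form a geometric series bounded by $B\sum_{j\ge0}(1-h\mu)^{jR}\le B/\bigl(1-(1-h\mu)^{R}\bigr)$; estimating the denominator from below by a quantity of order $h\mu R$ and tracking constants collapses the $\mathcal{O}(h^3R^2\sigma_*^2)$ per-epoch bias to the stated steady-state value $h^2\frac{LR}{2\mu}\sigma_*^2$. The improvement over SGD-RM is then transparent: the reshuffling bias is $\mathcal{O}(h^2R)$ rather than $\mathcal{O}(h)$, hence an order smaller in the stepsize.
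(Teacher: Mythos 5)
The paper does not prove this theorem itself; it simply cites \cite[Thm.~1]{Mishchenko2020}, and your outline reconstructs that proof's essential ingredients correctly: per-step contraction from $\mu$-strong convexity of each $f_{\boldsymbol{\omega}_i}$ (which is exactly why \cref{assum:smoothnessB} and the stepsize $h<1/L$ appear), exact cancellation $\sum_{i=1}^{R}\nabla f_{\boldsymbol{\omega}_i}(X_*)=0$ over a full epoch, the without-replacement variance of shuffled partial sums injecting the factor $R$, and a geometric-series unrolling over epochs with denominator of order $h\mu R$. The one place where your route diverges from the reference is precisely the step you flag as the main obstacle: controlling the mismatch between the weighted and unweighted sums of the noise terms $h\nabla f_{\boldsymbol{\omega}_i}(X_*)$. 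The cited proof dissolves this difficulty by introducing a shifted reference trajectory $X_*^i:=X_*-h\sum_{j<i}\nabla f_{\boldsymbol{\omega}_j}(X_*)$ and contracting $\|x_i-X_*^i\|^2$ step by step; since $X_*^R=X_*$, the epoch-end comparison is directly to the optimum, no weighted sum of noise ever appears, and the entire bias is carried by $\max_i\E\|X_*^i-X_*\|^2$, which is exactly the without-replacement partial-sum variance you identify (this is \cite[Prop.~1]{Mishchenko2020}, giving the $\tfrac{hLR}{4}\sigma_*^2$-type bound that produces the constant $\tfrac{1}{2}$ in the final estimate). Your accumulated-noise formulation can be pushed through, but it requires the extra drift bound and weighted-sum comparison you describe; if you adopt the shifted reference points instead, both of those auxiliary estimates become unnecessary and the constants fall out cleanly.
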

\begin{remark}
In the limit $n_e\to\infty$, with $K=n_eR$, the first term in each of the bounds in \cref{thm:SGDRM,thm:SGDRR} goes to 0 and one is left with the second term as a remainder. As shown by \cref{lem:sigmaK}, $\sigma_*^2\propto R$, and so one can think of the switch from SGD-RM to SGD-RR as decreasing the asymptotic bias from $\mathcal{O}(Rh)$ to $\mathcal{O}((Rh)^2)$.
\end{remark}

\paragraph{SGLD}
If one seeks to sample from the distribution $\pi_*$ associated to the potential $F$, it may be shown that one asymptotically samples, in the limit ${n_e\to\infty}$, from distributions that are closer to $\pi_*$ if RR is used instead of RM. In the succeeding we denote the distribution of samples generated after $k$ iterations of the SGLD scheme \cref{eq:SGLD} (whether by SGLD-RR or SGLD-RM) as $\widetilde{\pi}_k$.

\begin{theorem}[SGLD-RM]\label{thm:SGLDRM}
For a target measure $\pi_{*}$ with negative log-density $F:\mathbb{R}^{d} \to \mathbb{R}$ satisfying \cref{assum:smoothness}\cref{assum:smoothnessA} consider iterates $(x_{k})_{k\in\mathbb{N}}$ generated by SGLD-RM via \cref{eq:SGLD} with objective $F$, stepsize $0<h<2/(L+\mu)$ and $x_{0}\sim\widetilde{\pi}_0$. Assuming the stochastic gradients satisfy \cref{assum:stochastic_gradient} with constant $0<\sigma^{2}_{*}<\infty$ we have for $K\in \mathbb{N}$
\begin{equation}\label{eq:SGLDRM_noass3}     \mathcal{W}_{2}(\widetilde{\pi}_{K},\pi_*) \leq (1-h\mu)^{K}\mathcal{W}_{2}(\widetilde{\pi}_{0},\pi_*) + \sqrt{h}\frac{1.65L\sqrt{d}}{\mu}+\frac{\sigma_*\sqrt{h}}{\sqrt{\mu}}.
\end{equation}   
If in addition $F$ satisfies \cref{assum:additional_smoothness} then
\begin{equation}\label{eq:SGLDRM_ass3} 
    \mathcal{W}_{2}(\widetilde{\pi}_{K},\pi_*) \leq (1-h\mu)^{K}\mathcal{W}_{2}(\widetilde{\pi}_{0},\pi_*) + h\left(\frac{L_1d}{2\mu} +\frac{11L\sqrt{Ld}}{5\mu}\right)+\frac{\sigma_{*}\sqrt{h}}{\sqrt{\mu}}.
\end{equation}
\begin{proof}
    See\footnote{The bounds we show here are slightly looser than in the cited work - we have relaxed them for readability.} \cite[Thms. 4 and 5]{DalalyanSG}.
\end{proof}
\end{theorem}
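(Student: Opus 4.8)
The plan is to establish both inequalities through a single one-step synchronous coupling argument, following the approach of \cite{DalalyanSG}. I would introduce the continuous-time diffusion $(X_t)_{t\ge 0}$ solving \cref{eq:ODLangevin} with $X_0\sim\pi_*$, so that $X_t\sim\pi_*$ for all $t$ by invariance, and couple it to the SGLD-RM iterates by driving both with the same Gaussian randomness, identifying $\sqrt{2h}\,\xi_k$ in \cref{eq:SGLD} with the Brownian increment $\sqrt2\,(W_{(k+1)h}-W_{kh})$. Choosing $x_k$ to be optimally coupled to $X_{kh}$, so that $\mathbb{E}\|x_k-X_{kh}\|^2=\mathcal{W}_2^2(\widetilde\pi_k,\pi_*)$, it suffices to control the $L^2$ norm of $x_{k+1}-X_{(k+1)h}$ after one step and then iterate.

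The heart of the argument is to decompose $x_{k+1}-X_{(k+1)h}$ into three contributions: a gradient-flow term $(x_k-X_{kh})-h\big(\nabla F(x_k)-\nabla F(X_{kh})\big)$; a discretisation term $\int_{kh}^{(k+1)h}\big(\nabla F(X_s)-\nabla F(X_{kh})\big)\,ds$; and a stochastic-gradient term $-h\big(\widehat{\nabla F}(x_k)-\nabla F(x_k)\big)$. The decisive structural fact is that under RM the estimator is conditionally unbiased, $\mathbb{E}[\widehat{\nabla F}(x_k)\mid x_k]=\nabla F(x_k)$, so after squaring and taking expectations its cross terms with the other two contributions vanish and it enters additively as $h^2\,\mathbb{E}\|\widehat{\nabla F}(x_k)-\nabla F(x_k)\|^2$. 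I would bound the gradient-flow term by $(1-h\mu)\|x_k-X_{kh}\|$, the standard contraction of the map $x\mapsto x-h\nabla F(x)$ under $\mu$-strong convexity and $L$-smoothness for $0<h<2/(L+\mu)$, and control the stochastic term by $h^2\sigma_*^2$ through \cref{assum:stochastic_gradient}.

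Bounding the discretisation term is the main obstacle, and it is here that the two regimes separate. Under \cref{assum:smoothness}\cref{assum:smoothnessA} alone I would combine $L$-smoothness with the moment estimate $\mathbb{E}\|X_s-X_{kh}\|^2=O\big((s-kh)\,d\big)$ governed by the Brownian part of the diffusion; integrating over $[kh,(k+1)h]$ gives a one-step bound of order $h^{3/2}\sqrt d$, producing the factor $1.65\,L\sqrt d$ after the geometric summation. To obtain the sharper $O(h^2)$ bound under the additional Hessian-Lipschitz \cref{assum:additional_smoothness}, I would expand $\nabla F(X_s)-\nabla F(X_{kh})$ to second order, using that the leading first-order Brownian contribution has mean zero and that the $L_1$-Lipschitz Hessian controls the remainder; this more delicate computation is what yields the constants $\frac{L_1 d}{2\mu}$ and $\frac{11L\sqrt{Ld}}{5\mu}$, and is essentially the discretisation analysis of ULA.

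Finally I would assemble the pieces into a recursion of the shape $u_{k+1}^2\le\big((1-h\mu)u_k+D\big)^2+h^2\sigma_*^2$, where $u_k=\mathcal{W}_2(\widetilde\pi_k,\pi_*)$ and $D$ is the one-step discretisation bound, and resolve it by iteration rather than by a naive triangle inequality. The point is that the variance appears additively in squares, so summing the geometric series and taking a square root yields the $\sqrt h$ scaling $\sigma_*\sqrt{h/\mu}$ rather than the weaker $\sigma_*/\mu$ a linear recursion would give. The one subtlety to watch is that $\mathbb{E}\|\widehat{\nabla F}(x_k)-\nabla F(x_k)\|^2$ is naturally averaged over the current law $\widetilde\pi_k$ rather than over $\pi_*$; matching it to $\sigma_*^2$ is the step where the smoothness structure of the finite sum is needed, and is the technically delicate point in turning the heuristic recursion into a rigorous bound.
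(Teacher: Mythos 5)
Your proposal is correct and follows essentially the same route as the paper, which proves this theorem by deferring to \cite[Thms.~4 and 5]{DalalyanSG}: a synchronous coupling with the stationary diffusion, a one-step decomposition into a contractive gradient-flow term, a discretisation term (bounded at order $h^{3/2}\sqrt d$ in general and at order $h^2$ under the Hessian-Lipschitz assumption), and a conditionally unbiased stochastic-gradient term entering additively in squares, resolved via the quadratic recursion that yields the $\sigma_*\sqrt{h/\mu}$ scaling. You also correctly identify the one point the paper glosses over, namely transferring the variance bound from $\pi_*$ to the law of the iterates via $L$-smoothness of the $f_{\boldsymbol{\omega}}$.
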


\begin{theorem}[SGLD-RR]\label{thm:SGLDRR}
For a target measure $\pi_{*}$ with negative log-density $F:\mathbb{R}^{d} \to \mathbb{R}$ satisfying \cref{assum:smoothness}\cref{assum:smoothnessB} consider iterates $(x_{k})_{k\in\mathbb{N}}$ generated by SGLD-RR via \cref{eq:SGLD} with objective $F$, stepsize $0<h<1/L$ and $x_{0}\sim\widetilde{\pi}_0$. Assuming the stochastic gradients satisfy \cref{assum:stochastic_gradient} with constant $0<\sigma^{2}_{*}<\infty$ we have for $K\in \mathbb{N}$
\begin{equation}\label{eq:SGLDRR_noass3} 
        \mathcal{W}_{2}(\widetilde{\pi}_{K},\pi_*)\leq (1-h\mu)^{K}\mathcal{W}_{2}(\widetilde{\pi}_{0},\pi_*) + 240\frac{L}{\mu}\Bigg(h\sqrt{R}\sigma_{*}  + h(LR\sqrt{hd} + \sqrt{LRd})+ \sqrt{hd}\Bigg).
    \end{equation}
    If in addition $F$ satisfies \cref{assum:additional_smoothness} then
    \begin{equation}\label{eq:SGLDRR_ass3} 
        \mathcal{W}_{2}(\widetilde{\pi}_{K},\pi_*) \leq (1-h\mu)^{K}\mathcal{W}_{2}(\widetilde{\pi}_{0},\pi_*) + 240h\frac{L}{\mu}\Bigg(\sqrt{R}\sigma_{*}  + (LR\sqrt{hd} + \sqrt{LRd})+\left(\sqrt{Ld} +\frac{L_{1}}{L}d\right)\Bigg).
    \end{equation}
\begin{proof}
    See \cref{sec:proofs}.
\end{proof}
\end{theorem}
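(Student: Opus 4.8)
The plan is to adapt the coupling argument behind \cref{thm:SGLDRM} (following Dalalyan and Karagulyan) to Random Reshuffling, the essential new difficulty being that the RR gradient $\nabla f_{\boldsymbol{\omega}_i}$ is \emph{conditionally biased} given the current iterate, so the clean martingale cancellation of the noise available for RM is lost. I would first split the error with the triangle inequality through the full-gradient chain \cref{eq:GLD}: letting $(\bar{x}_k)$ denote the ULA iterates driven by the \emph{same} Gaussian increments $\xi_k$ as the SGLD-RR iterates $(x_k)$, and taking $\bar{x}_0$ and $x_0$ to start from a $\mathcal{W}_2$-optimal coupling, one has $\mathcal{W}_2(\widetilde{\pi}_K,\pi_*)\le \big(\mathbb{E}\|x_K-\bar{x}_K\|^2\big)^{1/2}+\mathcal{W}_2(\mathrm{law}(\bar{x}_K),\pi_*)$. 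The second term is the \emph{discretisation bias} of exact ULA, bounded by the standard analysis: $\mu$-strong convexity of $F$ gives per-step contraction $(1-h\mu)$ for $h<1/L$, and a one-step comparison with the stationary diffusion \cref{eq:ODLangevin} controlled by $L$-smoothness yields the $\sqrt{hd}$ contribution, improving to the $(\sqrt{Ld}+\tfrac{L_1}{L}d)$ term under the Hessian regularity of \cref{assum:additional_smoothness}.

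The first term, $\mathbb{E}\|x_K-\bar{x}_K\|^2$, isolates the \emph{stochastic-gradient bias} and is where the RR structure must be used. Cancelling the shared noise and adding and subtracting $h\nabla f_{\boldsymbol{\omega}_i}(\bar{x}_k)$ gives, for a step $k$ inside epoch $l$,
\[
x_{k+1}-\bar{x}_{k+1}=\big[(x_k-h\nabla f_{\boldsymbol{\omega}_i}(x_k))-(\bar{x}_k-h\nabla f_{\boldsymbol{\omega}_i}(\bar{x}_k))\big]+h\big(\nabla F(\bar{x}_k)-\nabla f_{\boldsymbol{\omega}_i}(\bar{x}_k)\big).
\]
The bracketed term is a $(1-h\mu)$-contraction, since each $f_{\boldsymbol{\omega}_i}$ is $\mu$-strongly convex and $L$-smooth with $h<1/L$; this is exactly where \cref{assum:smoothness}\cref{assum:smoothnessB} is needed, in contrast to the merely convex batches that suffice for RM. The residual $h(\nabla F-\nabla f_{\boldsymbol{\omega}_i})(\bar{x}_k)$ is the per-step gradient error, evaluated along the \emph{full-gradient} chain $\bar{x}_k$.

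The main obstacle is to accumulate these residuals over an epoch without incurring the naive $\mathcal{O}(hR\sigma_*)$ bound that would destroy the advantage of RR. Here I would use the defining cancellation of reshuffling: because the $\boldsymbol{\omega}_1,\dots,\boldsymbol{\omega}_R$ of one epoch partition $\{1,\dots,N\}$, one has $\sum_{i=1}^{R}\nabla f_{\boldsymbol{\omega}_i}(y)=R\,\nabla F(y)$ at \emph{any} fixed $y$, so anchoring all residuals of the epoch at the starting iterate $\bar{x}_{lR}$ makes the anchored sum vanish. The remaining error is then governed by the within-epoch displacement $\|\bar{x}_{lR+i}-\bar{x}_{lR}\|$ through $L$-smoothness, while the size of a single anchored deviation is controlled by the without-replacement variance identity of \cref{lem:sigmaK} and its underlying sampling lemma. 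This is precisely the mechanism, adapted from the SGD-RR analysis of \cite{Mishchenko2020}, that converts the $\mathcal{O}(R)$ accumulation into $\mathcal{O}(\sqrt{R})$ and produces the leading $h\sqrt{R}\sigma_*$ term. The within-epoch displacement is itself the sum of $\mathcal{O}(h)$ drift increments and $\mathcal{O}(\sqrt{hd})$ noise increments, and propagating these through the $L$-Lipschitz gradients is what generates the mixed correction terms $h\,(LR\sqrt{hd}+\sqrt{LRd})$. Carrying out this epoch-level bookkeeping in the presence of the injected Langevin noise, while keeping the variance estimates tied to the constant $\sigma_*$ of \cref{assum:stochastic_gradient}, is the delicate part of the argument.

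Finally I would close the recursion. Each of the two pieces obeys an estimate of the form $a_{k+1}\le(1-h\mu)a_k+(\text{per-step bias})$ (for the RR piece, an epoch-level version of it), so iterating and summing the geometric series $\sum_{k}(1-h\mu)^{k}\le 1/(h\mu)$ turns every per-step bias into a stationary bias carrying the characteristic $L/\mu$ prefactor, while the contraction survives only on the initial distance to give the $(1-h\mu)^K\mathcal{W}_2(\widetilde{\pi}_0,\pi_*)$ term. Assembling the discretisation and stochastic-gradient contributions, taking square roots, and bounding the accumulated numerical constants crudely from above yields the prefactor $240\,L/\mu$ and the bound \cref{eq:SGLDRR_noass3}; the sharpened bound \cref{eq:SGLDRR_ass3} follows identically, the only change being the improved ULA discretisation term afforded by \cref{assum:additional_smoothness}.
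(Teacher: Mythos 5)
Your proposal is correct in its essential mechanism and would deliver the stated rates, but it is architecturally different from the paper's proof. The core step is the same in both: anchor the per-step gradient errors of a full epoch at the epoch's starting point, use that the batches partition $\{1,\dots,N\}$ so the anchored sum vanishes exactly, control the remainder by $L$-smoothness times the within-epoch displacement, and use the without-replacement variance (via a Hoeffding-type lemma, \cite[Lemma 14]{paulin2024sampling} in the paper) to turn the $\mathcal{O}(R)$ accumulation into $\mathcal{O}(\sqrt{R})$ --- in the paper the $h\sqrt{R}\sigma_*$ term actually arises from the partial head/tail epochs at the block boundaries rather than from a single anchored deviation, but that is cosmetic. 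Where you diverge is the outer decomposition: you compare SGLD-RR to the synchronously coupled ULA chain and then invoke known ULA-to-$\pi_*$ bounds, whereas the paper compares SGLD-RR directly to the \emph{continuous} diffusion started at $\pi_*$ (\cref{prop:SGULA}) and globalises via an interpolation/telescoping argument over blocks of length $\Tilde{k}=\lceil 1/(hL)\rceil$ (\cref{prop:sg_contraction} supplying the contraction between blocks). The paper's choice buys a real simplification: every evaluation of the reference process is at stationarity, so \cref{lemma:disc_error} and the bound $\|\nabla F(X_s)\|_{L^2}\le\sqrt{Ld}$ apply verbatim, and the local error only ever needs to be propagated over $\mathcal{O}(1/(hL))$ steps with a harmless $e^{khL}$ Gr\"onwall factor before contraction is reapplied. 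Your route instead requires two additional ingredients you should not gloss over: (i) uniform-in-$k$ second-moment bounds for the non-stationary ULA chain $\bar{x}_k$ (its invariant law is $\widetilde{\pi}_\infty^{[ULA]}$, not $\pi_*$), since the within-epoch displacement and the anchored remainders are evaluated along $\bar{x}_k$; and (ii) the fact that the geometric weights $(1-h\mu)^{K-1-k}$ are not constant across an epoch, so the anchored cancellation is only approximate --- one must peel off the weight variation (of size $\mathcal{O}(Rh\mu)$ across an epoch) as a higher-order correction before the partition identity can be applied. Both are standard repairs, so your argument goes through, but the paper's stationarity-plus-interpolation device is precisely what lets it avoid them.
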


\begin{remark}
One has to be careful about interpreting \cref{thm:SGLDRM,thm:SGLDRR} since SGLD-RR does not have an invariant distribution (as is confirmed experimentally by the oscillation plots in \cref{fig:Exps}), although SGLD-RM does. However, {if we compare the results without \cref{assum:additional_smoothness}} one can see that in the limit $K\to\infty$, the first term in each of the bounds in \cref{eq:SGLDRM_noass3,eq:SGLDRR_noass3} goes to 0 and one is left with other terms as a remainder. 
As shown by \cref{lem:sigmaK}, $\sigma_*^2\propto R$, and hence ({in the case where \cref{assum:additional_smoothness} is not fulfilled) SGLD-RM asymptotically samples from a distribution that is $\mathcal{O}(h^{1/2}+(Rh)^{1/2})$-close to $\pi_*$, whereas SGLD-RR asymptotically samples from distributions that are instead $\mathcal{O}(h^{1/2}+Rh)$-close.}
\end{remark}

\begin{remark}
{If \cref{assum:additional_smoothness} holds, (cf. \cref{eq:SGLDRM_ass3,eq:SGLDRR_ass3}) we also have improved asymptotic convergence of $\mathcal{O}(h+Rh)$ for SGLD-RR  compared to $\mathcal{O}(h+(Rh)^{1/2})$ for SGLD-RM}. As a consequence, {for targets which fulfill \cref{assum:additional_smoothness}}, the number of steps $K$ to reach an error tolerance $\epsilon>0$ of the target measure scales as $\mathcal{O}(1/\epsilon)$ for SGLD-RR, which is a considerable improvement over $\mathcal{O}(1/\epsilon^{2})$ for SGLD-RM. One also may expect an improvement in the scaling in terms of dimension if stronger assumptions are made, as has been done in the full gradient setting in \cite{li2021sqrt}. 
\end{remark}

\begin{remark}{Note that the terms due to the stochastic gradient bias (involving $R$) are identical in the two bounds in \cref{thm:SGLDRR}, as \cref{assum:additional_smoothness} only alters the error contribution due to the discretisation bias. Even when the stochastic gradient bias disappears when using full gradients (i.e. \cref{eq:GLD}), it is not possible to improve on $\mathcal{O}(1/\epsilon)$ guarantees obtained under \cref{assum:additional_smoothness}.}
\end{remark}

\section{A Model Problem}\label{sec:ModelProblem}
In order to probe the tightness of the bounds in \cref{thm:SGLDRM,thm:SGLDRR}, we consider the application of SGLD-RR and SGLD-RM to a 1D sampling problem from \cite{Leimkuhler2016} for a dataset $Y=\{y_i\}_{i=1}^N$ with $y_i\in\R$ for $i=1,...,N$ and we define
\begin{equation}\label{eq:ModelProblem}
X\sim\mathcal{N}(\bar{y},\sigma^2/N),\quad\textnormal{where}\quad \quad \bar{y}\equiv \frac{1}{N}\sum_{i=1}^Ny_i. 
\end{equation}

Note that this sampling problem can be arrived at via considering $\{y_i\}_{i=1}^N$ independent and identically distributed random variables under a parametrised model $y_{\vert X}\sim\mathcal{N}(X,\sigma_y^2)$, and formulating a Bayesian inference problem for the parameter $X$, applying either a uniform prior as in \cite{Leimkuhler2016} or Gaussian prior (and rescaling) as in \cite{Vollmer2016}. Note also that $d$-dimensional versions of such problems which result in $X\sim\mathcal{N}(\boldsymbol{m},C)$ with $C\in\R^{d\times d}$ positive definite admit diagonalisation, giving uncoupled copies of the 1D case \cref{eq:ModelProblem}, reinforcing the relevance of this problem, since such diagonalisation commutes with the sampling algorithms considered here.
The continuous Langevin dynamics from \cref{eq:ODLangevin} then take the form
$$
    dX_t=-N\sigma^{-2}(X_t-\bar{y})dt+\sqrt{2}dW_t,
$$
so that the potential indeed takes the form of a finite sum as in \cref{eq:FiniteSum}, for $X \in \mathbb{R}^{d}$ defined by
$$
\nabla F(X)=\frac{1}{N}\sum_{i=1}^N N\sigma^{-2}(X-y_i).
$$
The stochastic gradient in \cref{eq:SGLD} is (cf. \cref{eq:stochgrad}) then generated, for $x\in \mathbb{R}^{d}$ by
$$
\widehat{\nabla F}(x)=N\sigma^{-2}(x-\widehat{y}_{\boldsymbol{\omega}_i}),\quad \widehat{y}_{\boldsymbol{\omega}_i}\equiv\frac{1}{n}\sum_{j=1}^{n}y_{\omega_{ij}},
$$
 with $n\ll N$, and the vectors $\boldsymbol{\omega}_{i}$ are generated via either the RR or RM randomisation procedure according to \cref{alg:GenAlg}. For ease in the following we will generically denote the stochastic estimate at iteration $k$ as $\widehat{y}_{k}$, without risk of confusion since the index is an integer not a vector.

The iterations for the SGLD scheme \cref{eq:SGLD} for the problem \cref{eq:ModelProblem} with initialisation $x_{0} \in \mathbb{R}$ are defined by the update rule
$$
x_{k+1}=x_k-hN\sigma^{-2}(x_k-\widehat{y}_k)+\sqrt{2h}\xi_k,\quad\xi_k\sim\mathcal{N}(0,1),
$$
which, upon rescaling time (preconditioning) $h\gets \sigma^2 h/N$ gives
\begin{equation}\label{eq:ModProbSGLD}
x_{k+1}=(1-h)x_k+h\widehat{y}_k+\sigma\sqrt{\frac{2h}{N}}\xi_k,\quad\xi_k\sim\mathcal{N}(0,1).
\end{equation}
After $k$ iterations, one has, starting from $x_0=0$,
\begin{equation}\label{eq:GenEx}
x_{k+1}=h\sum_{i=0}^k(1-h)^{i}\widehat{y}_{k-i}+\sigma\sqrt{\frac{2h}{N}}\sum_{i=0}^k(1-h)^i\xi_{k-i},
\end{equation}
therefore the asymptotic error in the mean is zero, since
$\E[x_{k}]=(1-(1-h)^{k})\bar{y}$ and so $\lim_{k\to\infty}\E[x_{k}]=\bar{y}$ \cite{Vollmer2016}. Consequently, to examine the differing asymptotic errors, we look at the asymptotic variance of the samples $\lim_{k\to\infty}\V[x_{k}]\equiv\V[x_{\infty}]$ and see how close it is to the  variance  $\sigma^2/N$ of the target $\pi_*$. 

{
The following calculations were performed using code available at the GitHub repository associated to this article and linked to in \cref{sec:Experiments}.
}

\medskip

\paragraph{ULA}
It is useful to compare the SGLD schemes with the scheme \cref{eq:GLD} (with no stochastic gradient bias) which inherits only the numerical bias of the Euler-Maruyama discretisation, which uses iterations of the form (after rescaling time $h\gets \sigma^2 h/N$)
\begin{equation}
    x_{k+1}=(1-h)x_k+h\bar{y}+\sigma\sqrt{\frac{2h}{N}}\xi_k,\quad\xi_k\sim\mathcal{N}(0,1).\label{eq:ModProbULA}
\end{equation}
For $x_0=0$, it is easy to see that $\lim_{k\to\infty}\E[x_{k}]=\bar{y}$ and that the asymptotic relative variance error follows $N\sigma^{-2}(\V[x_{\infty}]-\sigma^{2}/N)=2/(2-h)$ \cite{Vollmer2016}.

\medskip

\paragraph{SGLD-RM}
For RM, since the covariance between stochastic estimates $\widehat{y}_i,\widehat{y}_j$ is 0 except for $i=j$ with $V\equiv\mathbb{V}[\widehat{y}]=(nN)^{-1}(N-n)/(N-1)\left[\sum_{i=1}^N y_i^2-\bar{y}^2\right]$ \cite[Eq.(16)]{Vollmer2016}, one has directly
\begin{equation*}
\begin{aligned}
\V[x_{k+1}]&=h^2\sum_{i=0}^k(1-h)^{2i}V+\frac{2h\sigma^2}{N}\sum_{i=0}^k(1-h)^{2i}\\
&=Vh^2\frac{1-(1-h)^{2(k+1)}}{1-(1-h)^2}+2h\sigma^2\frac{(1-(1-h)^{2(k+1)})}{N(1-(1-h)^2)}.
\end{aligned}
\end{equation*}
The asymptotic limit $\V[x_{\infty}]=Vh/(2-h)+N^{-1}\sigma^2/(1-h/2),$ and
therefore the relative asymptotic bias in the variance is 
$$\frac{\V[x_{\infty}]-\sigma^{2}/N}{\sigma^{2}/N}=\frac{hNV}{\sigma^2(2-h)}+\frac{2}{2-h}-1=h\frac{NV}{2\sigma^2}+\frac{h}{2}+\mathcal{O}(h^2),$$
where $V\approx\sigma^2/n$ and hence the relative variance error is $\approx (hR+h)/2$ to leading order, in agreement with \cite{Vollmer2016}.

\medskip

\paragraph{SGLD-RR} 
For RR we begin by rewriting the expression \cref{eq:GenEx} as
\begin{gather*}
    x_{k+1}=h(1-h)^{r}\sum_{i=0}^{n_e-1}(1-h)^{iR}\sum_{j=0}^{R-1}(1-h)^{j}\widehat{y}^{(i,r)}_{k-j}
+h\sum_{i=0}^{r-1}(1-h)^{i}
\widehat{y}_{k-i}
+\sigma\sqrt{\frac{2h}{N}}\sum_{i=0}^k(1-h)^i\xi_{k-i}.
\end{gather*}
where $n_e=\lfloor(k+1)/R\rfloor, r=(k+1)-n_eR$ and $\widehat{y}^{(i,r)}_{k-j}=\widehat{y}_{k-j-iR-r}$.  Consequently, one has that the variance follows 
\begin{equation*}
\begin{aligned}
\V[x_{k+1}]=h^2(1-h)^{2r}\sum_{i=0}^{n_e-1}&(1-h)^{2iR}\sum_{j,j'=0}^{R-1}(1-h)^{(j+j')}
\text{cov}(\widehat{y}^{(i,r)}_{k-j},\widehat{y}^{(i,r)}_{k-j'})\\
&+h^2\sum_{i,i'=0}^{r-1}(1-h)^{(i+i')}
\text{cov}(\widehat{y}_{k-i},\widehat{y}_{k-i'})+\frac{2h\sigma^2}{N}\sum_{i=0}^k(1-h)^{2i}.
\end{aligned}
\end{equation*}

One has $\text{cov}(\widehat{y}_{j},\widehat{y}_{j'})=\E[(\widehat{y}_{j'}-\bar{y})\E[(\widehat{y}_{j}-\bar{y})|\widehat{y}_{j'}]]$ by the Law of Iterated Expectation. Since $\E[(\widehat{y}_{j}-\bar{y})|\widehat{y}_{j'}]=(N\bar{y}-n\widehat{y}_j)/(N-n)$, then in the spirit of \cite[Lemma 1]{Mishchenko2020}, one has
\begin{equation*}\label{eq:covVkk}
\text{cov}(\widehat{y}_{j},\widehat{y}_{j'})=-\frac{\V[\widehat{y}_{j}]}{R-1},
\end{equation*}
where $\V[\widehat{y}_{j}]=\text{cov}(\widehat{y}_{j},\widehat{y}_{j})=V$ is the same as the variance $V$ for RM above. One may then use the formulae for sums of geometric series, after separating into the $i\neq j$ and $i=j$ cases, to derive, taking $n_{e}\to\infty$,
\begin{gather*}
\frac{\V[x^{(r)}_{\infty}]-\sigma^{2}/N}{\sigma^{2}/N}=\frac{NV}{\sigma^2(R-1)}\left[\frac{Rh}{2-h}
-\left(\frac{(1-h)^{2r}(1-(1-h)^R)^2}{1-(1-h)^{2R}}+(1-(1-h)^r)^2\right)\right]+\frac{2}{2-h}-1,
\end{gather*}
where we introduce a different\footnote{A manifestation of the fact that SGLD-RR does \emph{not} give rise to a well-defined invariant distribution, in distinction to SGLD-RM.} asymptotic variance $\V[x^{(r)}_{\infty}]$ for each point in the epoch $r=0,\ldots R-1$, since variance error is periodic in $r$ (see \cref{fig:EMRR1}), a phenomenon which is familiar from the optimisation literature on SGD-RR \cite{Mishchenko2020}.
It is then possible to show that the average variance error over a period follows
$$
\frac{1}{R}\sum_{r=0}^{R-1}\frac{\V[x^{(r)}_{\infty}]-\sigma^{2}/N}{\sigma^{2}/N}=\frac{h}{2}+\frac{h^2}{4}+\frac{NVh^2(R+1)}{4\sigma^2}+\mathcal{O}(h^3),
$$
and so is $\mathcal{O}(h+(Rh)^2)$, which compares very favourably with SGLD-RM (see \cref{fig:EMRR2}).
\begin{figure}
\centering
\begin{subfigure}[t]{0.49\textwidth}
        \includegraphics[width=\textwidth]{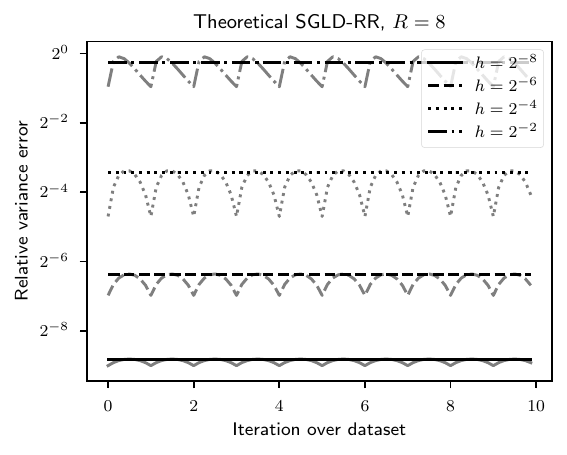}
        \caption{}\label{fig:EMRR1}
\end{subfigure}
\begin{subfigure}[t]{.49\textwidth}
\includegraphics[width=\textwidth]{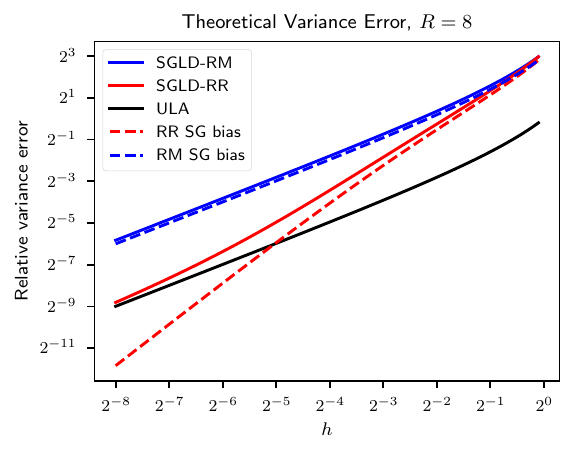}
\caption{}\label{fig:EMRR2}
\end{subfigure}
    \caption{a) For SGLD-RR the variance error is periodic, with period equal to an epoch. As $h$ decreases, the (non-periodic) bias due to using the Euler-Maruyama discretisation dominates the (periodic) gradient noise, and the oscillation amplitude decreases. Horizontal lines show the averaged variance error.
    b) The variance error expressions derived in \cref{sec:ModelProblem} for SGLD-RR and SGLD-RM compared. We set $V=R\sigma^2/N$. For RR, the overall error approaches the numerical bias (present even in the full gradient case of \cref{eq:GLD}). For SGLD-RM, the stochastic gradient bias dominates for all values of $h$.}
        \label{fig:SGLDRR}
\end{figure}

The theoretical results of the analysis shown in \cref{fig:SGLDRR} are confirmed by experiments (see \cref{fig:SGLDExperiment}).
\begin{figure}
    \centering
    \begin{subfigure}{.3\textwidth}
            \includegraphics[width=\textwidth]{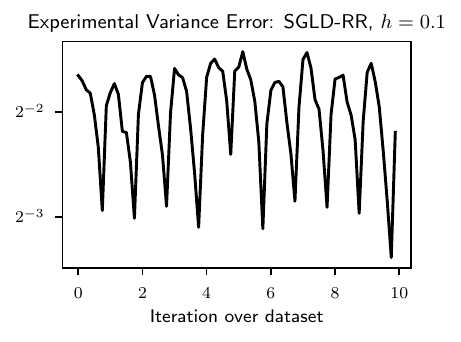}
        \caption{}
    \end{subfigure}
    \begin{subfigure}{.3\textwidth}
            \includegraphics[width=\textwidth]{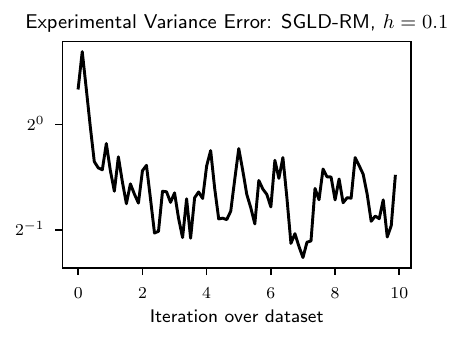}
            \caption{}
    \end{subfigure}
    \begin{subfigure}{.3\textwidth}
            \includegraphics[width=\textwidth]{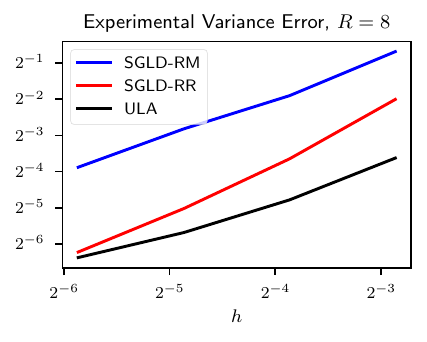}
            \caption{}
    \end{subfigure}
    \caption{Experiment for model problem \cref{eq:ModelProblem} with $\sigma^2=1$. We set $R=8$, $N=20\times R$, and draw $N$ samples $y_i$ from $\mathcal{N}(0,1)$. The number of epochs for sample collection is set as $n_{e}=100+20(hR)^{-3}$, with a burnin of 1000 steps. We then generate ULA samples via \cref{eq:ModProbULA}, while for SGLD-RR/SGLD-RM we use \cref{eq:ModProbSGLD} with the appropriate strategy and average over $10^4$ simultaneous stochastic gradient realisations. Plotted is the relative variance error $N\widehat{V}-1$ where we calculate the variance of the chain of samples as $\widehat{V}$ after a burnin of 1000 steps. Periodicity in the asymptotic regime (figures show last 10 epochs) is apparent for RR but not for RM as seen in (a),(b); in addition, the bias is reduced by switching from RM to RR, and the latter approaches the line for ULA (i.e. Euler-Maruyama discretisation using the full gradient).}
    \label{fig:SGLDExperiment}
\end{figure}

\subsection{Discussion}
Clearly, the target distribution $\pi_*=\mathcal{N}(\bar{y},\sigma^2/N)$ is Gaussian. The ULA iterates $x_{k}^{[ULA]}$ are also Gaussian for any $k$ (since they can be expressed as the sums of Gaussian variables). However the SGLD-RR and SGLD-RM iterates are not, as the stochastic gradients $\hat{y}$ are not Gaussian random variables. One could argue that, via the Central Limit Theorem, the approximation of them by Gaussian random variables is valid however \cite{Leimkuhler2016}. In this case one may use the formula for the Wasserstein distance between two 1D normal distributions $\mu_1=\mathcal{N}(m_1,\sigma^2_1),\mu_2=\mathcal{N}(m_2,\sigma^2_2)$
 $$
 \mathcal{W}_2(\mu_1,\mu_2)=\left[(m_1-m_2)^2+(\sigma_1-\sigma_2)^2\right]^{1/2},
 $$
 to calculate (approximate) asymptotic Wasserstein distances relative to the target $\pi_*=\mathcal{N}(\bar{y},\sigma^2/N)$ for the different schemes applied to the model problem. These distances depend only on the variance error, since for every scheme the asymptotic error in the mean is 0:
\begin{equation*}
    \begin{aligned}
        \mathcal{W}_2(\widetilde{\pi}_{\infty}^{[ULA]},\pi_*)&=\frac{\sigma}{\sqrt{N}}\left|1-\frac{1}{\sqrt{1-h/2}}\right|=\mathcal{O}(h);\\
        \mathcal{W}_2(\widetilde{\pi}_{\infty}^{[RM]},\pi_*)&\approx\frac{\sigma}{\sqrt{N}}\left|1-\sqrt{\frac{2+hNV/\sigma^2}{2-h}}\right|=\mathcal{O}(h+Rh); \\
\max_{0\leq r<R}\mathcal{W}_2(\widetilde{\pi}_{\infty,r}^{[RR]},\pi_*)&\approx\mathcal{O}(h+(Rh)^2),
    \end{aligned}
\end{equation*}
where we introduce $R$ distributions $\{\widetilde{\pi}_{\infty,r}^{[RR]}\}_{r=0}^{R-1}$ for each asymptotic variance $\V[x^{(r)}_{\infty}]$ derived for SGLD-RR above. We also use $\approx$ to indicate the use of the approximate Gaussianity of the stochastic gradients.

For the Gaussian case, one then has much better dependence on $Rh$ than would be expected from the bounds {\cref{eq:SGLDRM_ass3,eq:SGLDRR_ass3}} in \cref{thm:SGLDRM,thm:SGLDRR} (the potential for the model problem fulfills \cref{assum:additional_smoothness}). {Specifically, \cref{eq:SGLDRM_ass3} suggests convergence of $\mathcal{O}(h+(Rh)^{1/2})$ for SGLD-RM, worse than the $\mathcal{O}(h+Rh)$ for the Gaussian obtained above; \cref{eq:SGLDRR_ass3} suggests convergence of $\mathcal{O}(h+Rh)$ for SGLD-RR, worse than the $\mathcal{O}(h+(Rh)^2)$ for the Gaussian obtained above. For SGLD-RM, whether the Gaussian is simply special or actually suggests the bound in \cref{thm:SGLDRM} is loose is a well-known open problem in the field;} we can now add the analogous question for SGLD-RR. Related results obtained via different techniques for observables or the invariant distribution itself suggest higher order convergence than the best known Wasserstein bounds for SGLD-RM in \cref{thm:SGLDRM} \cite{Vollmer2016,Nagapetyan2017}.

\section{Logistic Regression Experiments}\footnote{Code for the experiments and for the analytical Gaussian calculations in \cref{sec:ModelProblem} may be found at \url{https://github.com/lshaw8317/RandomReshuffleSGLD}.}\label{sec:Experiments}
As a final demonstration we apply SGLD with the two randomisation strategies to {logistic regression problems for two real datasets (``StatLog'' and ``CTG'') and a simulated dataset (``SimData'') \cite{Casas2022}. The negative log-density is then, $X\in \mathbb{R}^{d}$,}
$$
F(X)=\sum_{i=1}^NX^TD^{-1}X -z_iX^T\widetilde{{y}}_i+\log\left[1+\exp\left(X^T\widetilde{{y}}_i\right)\right],
$$
where the datapoints $y_i=[z_i,\widetilde{y}_i]^T$ are composed of labels $z_i\in\{0,1\}$ 
and feature variables $\widetilde{{y}}_i\in\R^d$ for $i = 1,...,N$. The diagonal matrix  $D=\mathrm{diag}\{N\lambda_i^2:i=1,\ldots, d\}$ encodes the prior information{, with $\lambda_i^2=25, \forall i$}. This potential then fulfills \cref{assum:smoothness}\cref{assum:smoothnessB} and \cref{assum:additional_smoothness}.
{
\paragraph{SimData}
We generate simulated data according to the same procedure and parameter values described in \cite{Casas2022}.
The first step is to generate
${\hat{y}}_i\sim\mathcal{N}(0,{\sigma}^2)$ with ${\sigma}^2=\mathrm{diag}\left\{\sigma_j^2: j=1\ldots,d-1\right\}$, where
$$
\sigma^2_j=\begin{cases}
25 &  j\leq 5\\
1 &  5<j\leq 10\\
0.04 &  j>10
\end{cases}.
$$
Then, we generate the true parameters $X_{true}=[\alpha,{\beta}]^T$ with $\alpha\sim\mathcal{N}(0,\gamma^2)$ and the vector ${\beta}\in\mathbb{R}^{d-1}$ with independent components following $\beta_j\sim\mathcal{N}(0,\gamma^2),j=1,\ldots,d-1$, with $\gamma^2=1$.
Augmenting the data $\widetilde{{y}}_i=[1,\hat{y}_i^T]^T$, $z_i$ is then generated as a Bernoulli random variable $z_i\sim\mathcal{B}((1+\exp(-X_{true}^T\widetilde{{y}}_i))^{-1})$.
In concreteness, a simulated data set $\{z_i, \widetilde{y}_i\}_{i=1}^n$ with $n=1024$ samples is generated, $\hat{y}_i\in\mathbb{R}^{d-1}$ with $d-1=100$.
\paragraph{StatLog} This dataset is a common benchmark for logistic regression problems,  and has $n = 4435$, $d-1 = 36$.
\paragraph{CTG} This dataset is a common benchmark for logistic regression problems,  and has $n = 2126, d-1 = 21$.
}

We evaluate the true posterior mean $\mu\equiv\mathbb{E}_{X\sim\pi_*}[X]$  (established by a long HMC run of {$10^6$} samples) and examine convergence of the empirical mean via
$$
\frac{\left\|\Delta\mu\right\|}{\left\|\mu\right\|}\equiv\frac{\left\|\frac{1}{K}\sum_{k=1}^Kx_k-{\mu}\right\|_2}{\left\|\mu\right\|_2}.
$$
The empirical mean is calculated using samples $(x_k)_{k=1}^K$ generated via the SGLD-RR, SGLD-RM and ULA schemes. In \cref{fig:Exps}, one can compare the SGLD methods to the performance of the ULA scheme with full gradient, which has no stochastic gradient bias (only that associated with the discretisation).

We confirm the presence of oscillations for non-Gaussian targets, and the increased order of convergence to the posterior mean for RR compared to RM. We emphasise that RR actually runs faster for a given number of epochs owing to its exploitation of caching, which makes its absolute improved performance all the more remarkable, since it essentially comes at a discount\footnote{In \cite{Mishchenko2020}, the authors note that the RM procedure may be implemented in a slightly more efficient way than the procedure described in \cref{sec:randomisation_strategies}. One may permute the data only once, at the very beginning of sampling, and then at every epoch, randomly select $R$ dataset indices in $\{0,\ldots, N-1\}$ and for each such index take the succeeding $n=N/R$ datapoints to form each batch. This requires less cached memory and so runs faster, whilst giving the same results in expectation, since in expectation it iterates over all possible batches.}.

\begin{figure}
\centering
\begin{subfigure}{.3\textwidth}
\includegraphics[width=\textwidth]{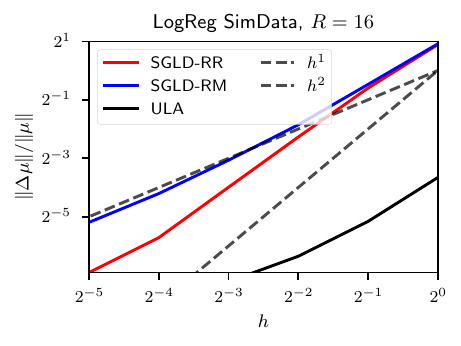}
\end{subfigure}
\begin{subfigure}{.3\textwidth}
\includegraphics[width=\textwidth]{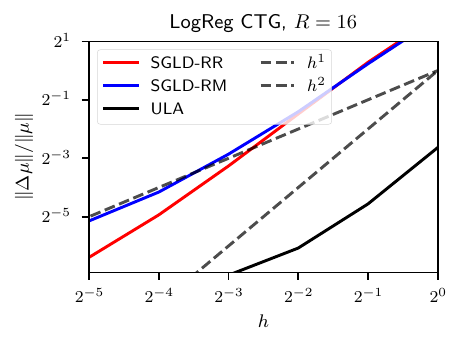}
\end{subfigure}
\begin{subfigure}{.3\textwidth}
\includegraphics[width=\textwidth]{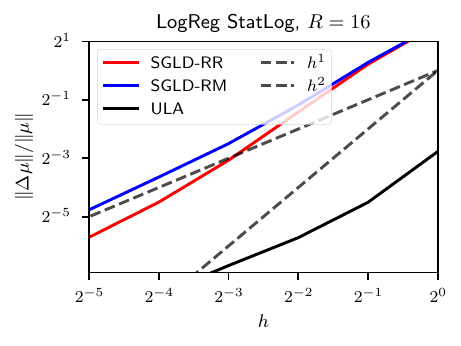}
\end{subfigure}
\begin{subfigure}{.3\textwidth}
\includegraphics[width=\textwidth]{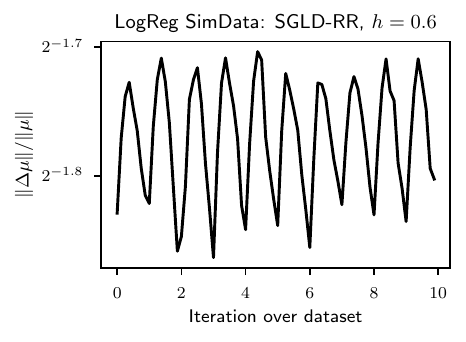}
\end{subfigure}
\begin{subfigure}{.3\textwidth}
\includegraphics[width=\textwidth]{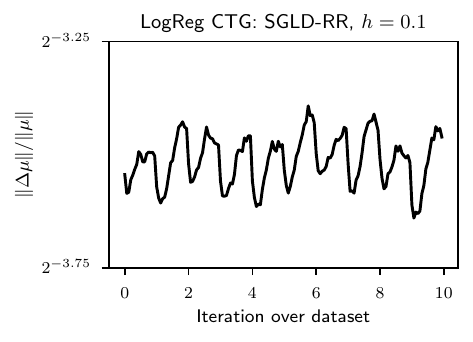}
\end{subfigure}
\begin{subfigure}{.3\textwidth}
\includegraphics[width=\textwidth]{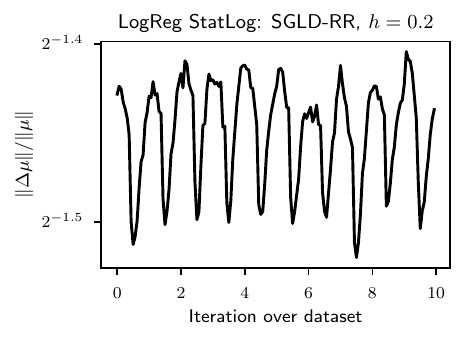}
\end{subfigure}
\caption{Experiments for SGLD with SGLD-RR, SGLD-RM and ULA, for some of the logistic regression problems from \cite{Casas2022} (although for SimData with a smaller dataset of size $N=1024$). The number of epochs $n_{e}=100+20(hR)^{-3}$, and we average over $10^4$ realisations. Periodicity for the relative error in the posterior mean in the asymptotic regime (figures show last 10 epochs) is apparent for RR but not for RM.
As a guide we include dashed reference lines of slope 1 and 2 for the error plots.}
\label{fig:Exps}
\end{figure}

\section{Conclusion}
As is now clear, Random Reshuffling is well-justified as an alternative batching strategy for the sampling problem just as is the case for optimisation, with lower overheads than the standard RM policy, in distinction to most other modifications to SGLD-RM which typically come with higher overheads (usually compensated for by better performance) \cite{Chatterji2018,Brosse2018}. It is perhaps of interest to explore the effect of other, more exotic batching strategies familiar from optimisation to see if they may also improve performance for sampling \cite{Cha2023}. We remark also that the RR procedure does not preclude the use of other variance reduction techniques - indeed we point out that the intermediate results of the proof of \cref{thm:SGLDRR} show that SGLD-RR is strong order one in the timestep $h$, which would allow for improved multilevel Monte Carlo and unbiased estimation methods \cite{GilesMultilevel,GilesActa,ZygalakisMultilevel,chada2023unbiased}.

As mentioned in the introduction, one does not have to limit oneself to using the Euler-Maruyama discretisation to derive the sampling scheme based on the Langevin equation \cref{eq:ODLangevin}. Given that the Gaussian analysis suggests that RR gives $\mathcal{O}(h^2)$ error in the gradient noise, it would be interesting to combine it with the Leimkuhler-Matthews discretisation \cite{LeMa13}, which is of weak order $2$. One would then have a method of overall order 2, with error $\mathcal{O}(h^2+(Rh)^2)$. 

While this work has focussed on machine learning applications, the principle of improved performance carries over to other fields such as molecular dynamics. It would be interesting to consider the implications of the results here for the Random-Batch Method (see \cite{RBM}), which is used for simulating systems of interacting particles, where one uses a stochastic gradient approximation based on a random subset of the particles. 

As final methodological point{s}, the most novel aspect of this work is the local discretisation analysis in \cref{prop:SGULA}, which can be {directly} combined with alternative Wasserstein convergence results in the non-convex setting to allow for theoretical guarantees without convexity \cite{DurmusBias2024,Eberle2019,Majka2020} {by instead assuming each $f_{\mathbf{\omega}}$ is convex outside of a ball of radius $R > 0$}. {In particular, one can consider the same interpolation argument of \cref{thm:SGLDRR} with an appropriately designed distance metric.}

{Further, the nature of the proofs in the optimisation setting in \cite{Mishchenko2020} extend to other minibatching strategies, namely, ``Shuffle-Once" and ``Incremental Gradient", where the minibatches and the order of the minibatches are fixed either deterministically or chosen once randomly at initialisation. We remark that the techniques developed here for analysing RR can also be extended to these strategies; however, yielding a worse dependence on $R$.}

\section*{Acknowledgements} The authors thank Yuansi Chen, Ben Leimkuhler, Daniel Paulin, Jes\'{u}s Mar\'{i}a Sanz-Serna and Kostas Zygalakis for helpful discussions in the early stages of this work. 

\appendix
\section{Proofs}\label{sec:proofs}
\begin{proposition}\label{prop:sg_contraction}
    Let us consider two realisations $(u_{i})^{\infty}_{i=0}$, $(v_{i})^{\infty}_{i=0}$ of SGLD-RR implemented via \cref{eq:SGLD} on $\mathbb{R}^{d}$ with step size $h>0$, objective function $F$, and shared noise, where $F:\mathbb{R}^{d} \to \mathbb{R}$ satisfies \cref{assum:smoothness}\cref{assum:smoothnessB}.  Assume that $h < 2/L$ then for $r,k\in \mathbb{N}$ we have
    \begin{equation}
        \|u_{r+k}-v_{r+k}\|_{L^{2}} \leq (1-h\mu)^{k/2}\|u_{r}-v_{r}\|_{L^{2}},
    \end{equation}
    where $\|\cdot\|_{L^{2}} := \left(\mathbb{E}\|\cdot\|^{2}\right)^{1/2}$.
\end{proposition}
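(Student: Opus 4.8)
The plan is to exploit the synchronous (shared-noise) coupling that is built into the statement: both chains $(u_i)$ and $(v_i)$ are driven by the same Gaussian increments and cycle through the same sequence of reshuffled mini-batches. Writing the common batch used at step $i$ as $\boldsymbol{\omega}_i$ and setting $e_i := u_i - v_i$, I would first subtract the two update equations \cref{eq:SGLD}. Because the additive noise $\sqrt{2h}\,\xi_i$ is identical in both chains it cancels exactly, leaving the purely deterministic recursion
\[
e_{i+1} = e_i - h\bigl(\nabla f_{\boldsymbol{\omega}_i}(u_i) - \nabla f_{\boldsymbol{\omega}_i}(v_i)\bigr) = T_{\boldsymbol{\omega}_i}(u_i) - T_{\boldsymbol{\omega}_i}(v_i),
\]
where $T_{\boldsymbol{\omega}}(x) := x - h\nabla f_{\boldsymbol{\omega}}(x)$ is the gradient-descent map for the mini-batch objective $f_{\boldsymbol{\omega}}$. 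The whole argument thus reduces to a deterministic, per-step contraction of these maps, uniformly over all admissible batches.

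The core step is to show that each $T_{\boldsymbol{\omega}}$ is a contraction with factor $(1-h\mu)^{1/2}$. Here is where \cref{assum:smoothness}\cref{assum:smoothnessB} is essential: every $f_{\boldsymbol{\omega}}$ (not merely $F$) is $\mu$-strongly convex and $L$-smooth, so the contraction holds at every point of the epoch rather than only on average. I would establish it by the standard expansion
\[
\|T_{\boldsymbol{\omega}}(x) - T_{\boldsymbol{\omega}}(y)\|^2 = \|x-y\|^2 - 2h\langle \nabla f_{\boldsymbol{\omega}}(x) - \nabla f_{\boldsymbol{\omega}}(y), x-y\rangle + h^2\|\nabla f_{\boldsymbol{\omega}}(x) - \nabla f_{\boldsymbol{\omega}}(y)\|^2,
\]
then invoking $\mu$-strong convexity, $\langle \nabla f_{\boldsymbol{\omega}}(x)-\nabla f_{\boldsymbol{\omega}}(y), x-y\rangle \ge \mu\|x-y\|^2$, together with the co-coercivity consequence of $L$-smoothness, $\|\nabla f_{\boldsymbol{\omega}}(x)-\nabla f_{\boldsymbol{\omega}}(y)\|^2 \le L\langle \nabla f_{\boldsymbol{\omega}}(x)-\nabla f_{\boldsymbol{\omega}}(y),x-y\rangle$. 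For $h<2/L$ the factor $2-hL$ is positive, so these combine to give $\|T_{\boldsymbol{\omega}}(x)-T_{\boldsymbol{\omega}}(y)\|^2 \le \bigl(1 - h\mu(2-hL)\bigr)\|x-y\|^2$, and the step-size restriction then lets me bound the bracket by $1-h\mu$. The delicate point, and the main obstacle, is exactly this tuning of the step-size condition against the strong-convexity and smoothness constants: one must check that the quadratic-in-$h$ contraction factor stays below $(1-h\mu)^{1/2}$, which is sharp for purely quadratic $f_{\boldsymbol{\omega}}$ and is where the constant $(1-h\mu)^{1/2}$ (equivalently the classical $\max\{|1-h\mu|,|1-hL|\}$ Lipschitz constant of the forward map) originates.

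Finally I would iterate. Since the contraction $\|e_{i+1}\| \le (1-h\mu)^{1/2}\|e_i\|$ holds pointwise (almost surely), composing it over the $k$ steps from index $r$ to $r+k$ gives $\|e_{r+k}\| \le (1-h\mu)^{k/2}\|e_r\|$ almost surely, irrespective of the position $r$ within the epoch. Squaring, taking expectations, and applying the square root then yields $\|u_{r+k}-v_{r+k}\|_{L^{2}} \le (1-h\mu)^{k/2}\|u_r-v_r\|_{L^{2}}$, as claimed. I would emphasise that no independence or unbiasedness of the reshuffled gradients is needed: the estimate is entirely pathwise and owes its simplicity to the shared-noise coupling, which is precisely what makes this contraction a convenient building block for the interpolation argument in the proof of \cref{thm:SGLDRR}.
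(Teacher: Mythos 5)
Your strategy is the same one the paper leans on: its proof of \cref{prop:sg_contraction} is a one-line citation to the standard contraction lemma for the gradient-descent map (\cite[Lemma 1]{Dalalyan2017further}, \cite[Lemma 2]{DalalyanSG}), and you have essentially written that lemma out. Your observation that the shared noise and shared batches reduce everything to a pathwise, per-step contraction of $T_{\boldsymbol{\omega}}(x)=x-h\nabla f_{\boldsymbol{\omega}}(x)$, with no independence or unbiasedness of the reshuffled gradients required, is exactly right, as is the role of \cref{assum:smoothness}\cref{assum:smoothnessB} in making each $f_{\boldsymbol{\omega}}$ (and not just $F$) $\mu$-strongly convex. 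The expansion together with strong convexity and co-coercivity correctly yields
\[
\|T_{\boldsymbol{\omega}}(x)-T_{\boldsymbol{\omega}}(y)\|^2 \le \bigl(1-h\mu(2-hL)\bigr)\|x-y\|^2 .
\]

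The gap is in your final step: the assertion that ``the step-size restriction then lets me bound the bracket by $1-h\mu$'' is not implied by $h<2/L$. One has $1-h\mu(2-hL)\le 1-h\mu$ if and only if $2-hL\ge 1$, i.e.\ $h\le 1/L$, so for $h\in(1/L,2/L)$ your per-step factor is strictly weaker than the claimed $(1-h\mu)^{1/2}$. Nor can this be repaired by a sharper argument on the full stated range: for $f_{\boldsymbol{\omega}}(x)=\tfrac{L}{2}\|x\|^2$ (which is $\mu$-strongly convex for any $\mu\le L$) the map $T_{\boldsymbol{\omega}}$ scales distances by exactly $|1-hL|$, and $(1-hL)^2>1-h\mu$ whenever $h>2/L-\mu/L^2$, so the stated rate genuinely fails near the upper end of the interval $h<2/L$. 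In practice the damage is contained, since \cref{thm:SGLDRR} --- the only place the proposition is invoked --- assumes $h<1/L$, where your argument is complete; but as written you have not proved the proposition on its stated range, and you should either restrict to $h\le 1/L$ or state the contraction with the factor $\max\{|1-h\mu|,|1-hL|\}$ as in the cited lemmas.
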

\begin{proof}
Follows from \cite[Lemma 1]{Dalalyan2017further} or \cite[Lemma 2]{DalalyanSG}.
\end{proof}

\begin{proposition}\label{prop:SGULA}
Under the assumptions of \cref{thm:SGLDRR}, assuming that for $r\in \mathbb{N}$, $x_{r} = X_{r} \sim \pi_{*}$ we have for $k \in \mathbb{N}$
\begin{align*}
    &\|x_{r+k} - X_{(r+k)h}\|_{L^{2}} \leq \\
    &4e^{khL}\Bigg(h\sqrt{R}\sigma_{*}  + h^{3/2}L\sqrt{k}(R\sqrt{hLd} + \sqrt{2Rd})+\frac{1}{2}kh^2\left(L\sqrt{Ld} +L_{1}d\right)+ \frac{1}{2}h^{3/2}L\sqrt{kd}\Bigg).
\end{align*}
If \cref{assum:additional_smoothness} does not hold then instead we can bound
\begin{align*}
    &\|x_{r+k} - X_{(r+k)h}\|_{L^{2}} \leq \\4e^{khL}\Bigg(h\sqrt{R}\sigma_{*}  + &h^{3/2}L\sqrt{k}(R\sqrt{hLd} + \sqrt{2Rd})+ \frac{1}{2}kh^{3/2}\left(h^{1/2}L\sqrt{Ld} + L\sqrt{d}\right)\Bigg).
\end{align*}
\end{proposition}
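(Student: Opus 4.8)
The plan is to run a synchronous coupling between the SGLD-RR chain $(x_{r+j})_{j\ge0}$ and the continuous Langevin diffusion $(X_{(r+j)h})_{j\ge0}$ of \cref{eq:ODLangevin}, driving both by the same Brownian path so that the Gaussian increment $\sqrt{2h}\,\xi_{r+j}$ in \cref{eq:SGLD} equals the Brownian increment $\sqrt{2}\,(W_{(r+j+1)h}-W_{(r+j)h})$. By hypothesis $x_r=X_{rh}\sim\pi_*$, so the coupling starts from zero error. Writing $e_j:=x_{r+j}-X_{(r+j)h}$ and subtracting the integrated SDE from the discrete update yields, after adding and subtracting $h\nabla F(X_{(r+j)h})$, the recursion
\[
e_{j+1} = \Big(e_j - h\big[\nabla F(x_{r+j})-\nabla F(X_{(r+j)h})\big]\Big) - h\,A_{r+j} - C_{r+j},
\]
with stochastic-gradient discrepancy $A_{r+j}:=\widehat{\nabla F}(x_{r+j})-\nabla F(x_{r+j})$ and discretisation defect $C_{r+j}:=\int_{(r+j)h}^{(r+j+1)h}\big[\nabla F(X_{(r+j)h})-\nabla F(X_s)\big]\,ds$. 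By $L$-smoothness the bracketed term has $L^2$-norm at most $(1+hL)\|e_j\|_{L^2}$, so unrolling produces the geometric factor $(1+hL)^k\le e^{khL}$; the content of the proof is to control the accumulated $A$ and $C$ contributions while retaining the favourable $\sqrt{k}$ and $\sqrt{R}$ dependence.

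The crucial and most delicate point is the treatment of $A$: for Random Reshuffling it is \emph{not} conditionally mean-zero, so the step-by-step variance bound used in the RM analysis of \cite{DalalyanSG} is unavailable. First I would exploit the defining epoch structure, namely that over any block of $R$ consecutive steps forming a full epoch each $\nabla g_i$ is used exactly once, whence $\sum_{i=1}^{R}\nabla f_{\boldsymbol{\omega}_i}(w)=R\,\nabla F(w)$ for any fixed $w$ (cf. \cref{eq:stochgrad,eq:FiniteSum}). Fixing the epoch's starting iterate $w_0$ and writing $\sum_i\nabla f_{\boldsymbol{\omega}_i}(w_{i-1})=R\nabla F(w_0)+\sum_i[\nabla f_{\boldsymbol{\omega}_i}(w_{i-1})-\nabla f_{\boldsymbol{\omega}_i}(w_0)]$, the leading term cancels exactly against the full-gradient drift, and the residual is controlled by $L$-smoothness through the \emph{within-epoch displacement} $\|w_{i-1}-w_0\|_{L^2}$. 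The mean-zero fluctuation $\nabla f_{\boldsymbol{\omega}_i}(w_0)-\nabla F(w_0)$ is estimated in $L^2$ using \cref{assum:stochastic_gradient} together with the sampling-without-replacement identity \cite[Lemma 1]{Mishchenko2020} behind \cref{lem:sigmaK}; the uncancelled fluctuation from the current, possibly incomplete, epoch gives the $h\sqrt{R}\sigma_*$ term, while bounding the within-epoch displacement by elementary moment estimates on the gradients and the Gaussian increments produces the $R\sqrt{hLd}+\sqrt{2Rd}$ and $h^{3/2}L\sqrt{kd}$ terms.

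The defect $C$ is handled by regularity of the strongly convex Langevin diffusion. Under \cref{assum:additional_smoothness} I would Taylor-expand $\nabla F(X_{(r+j)h})-\nabla F(X_s)$ along the SDE and apply It\^o's formula: the martingale part integrates to a conditionally mean-zero increment, while the remaining drift is $O(h^2)$, with its second-order piece controlled by the Hessian-Lipschitz constant to give the $\tfrac12 kh^2\!\left(L\sqrt{Ld}+L_1 d\right)$ contribution; without \cref{assum:additional_smoothness} one keeps only $L$-smoothness and the crude bound $\|X_{(r+j)h}-X_s\|_{L^2}\lesssim\sqrt{(s-(r+j)h)\,d}$, yielding the worse $\tfrac12 kh^{3/2}\!\left(h^{1/2}L\sqrt{Ld}+L\sqrt{d}\right)$ of the second display. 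I would then assemble everything via a discrete Gr\"onwall argument, carefully separating terms by their conditional-mean structure: the genuinely mean-zero (martingale) increments—from the Brownian noise and the residual gradient fluctuations—are summed using $L^2$-orthogonality, converting a would-be factor $k$ into $\sqrt{k}$, whereas the predictable bias terms (the within-epoch drift correction and the $O(h^2)$ discretisation drift) are summed by the triangle inequality and contribute the linear-in-$k$ factors. Bounding the geometric growth by $e^{khL}$ and collecting the constants then gives both displayed bounds.

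The main obstacle is exactly the bookkeeping of the Random Reshuffling bias: because $\E[A_{r+j}\mid\text{past}]\neq0$, a naive triangle-inequality treatment would degrade the scaling from the sharp $\sqrt{R}\sigma_*$ and $\sqrt{k}$ to the far worse $R\sigma_*$ and $k$, erasing the advantage over SGLD-RM. The technical heart is therefore to reorganise the sum over steps into epochs, deploy the exact full-epoch cancellation, and bound the within-epoch displacement tightly enough—which itself requires uniform-in-$j$ second-moment control of the iterates near $\pi_*$—so that the $L$-smoothness correction does not dominate the gain; \cref{assum:additional_smoothness} enters only through $C$, which is why the $R$-dependent terms are identical in the two bounds.
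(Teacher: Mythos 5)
Your proposal follows the same architecture as the paper's proof: a synchronous coupling started from stationarity; a three-way decomposition into a Lipschitz stability term, a stochastic-gradient error, and a local discretisation defect; full-epoch cancellation of the reshuffled gradients to obtain $\sqrt{R}\sigma_*$ rather than $R\sigma_*$; orthogonality of the mean-zero contributions across epochs (and of the It\^{o}-corrector part of the defect) to convert a factor $k$ into $\sqrt{k}$; and a discrete Gr\"{o}nwall step producing $e^{khL}$. The one substantive divergence is \emph{where} the stochastic-gradient error is evaluated. You take $A_{r+j}=\widehat{\nabla F}(x_{r+j})-\nabla F(x_{r+j})$ at the discrete iterates and anchor the epoch cancellation at the epoch-start iterate $w_0$, so your residual is controlled by the within-epoch displacement of the discrete chain; as you acknowledge, this forces uniform-in-$j$ second-moment bounds on the iterates, and it also leaves a mismatch with \cref{assum:stochastic_gradient}, which defines $\sigma_*$ under $X\sim\pi_*$ rather than under the law of $w_0$ --- closing that gap reintroduces $\lVert x_j-X_{jh}\rVert$ and risks circularity. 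The paper instead writes the error as $D_{k}=\nabla F(X_{kh})-\nabla f_{\omega_{k}}(X_{kh})$ along the \emph{continuous, stationary} trajectory, absorbing the difference $\nabla f_{\omega_k}(x_k)-\nabla f_{\omega_k}(X_{kh})$ into the Lipschitz stability term; then $\sigma_*$ applies verbatim, the within-epoch displacement is that of the diffusion and is bounded unconditionally by \cref{lemma:disc_error}, and no a priori moment control of the discrete chain is needed. With that substitution (and a concrete maximal inequality for without-replacement sums --- the paper uses \cite[Lemma 14]{paulin2024sampling} where you appeal to \cite[Lemma 1]{Mishchenko2020}) your outline matches the paper's argument; the remaining ingredients, including the splitting $E_k=A^k_2+A^k_{3/2}$ of the defect under \cref{assum:additional_smoothness} and the cruder $O(h^{3/2})$ bound without it, are as in the paper.
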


\begin{proof}
We introduce the notation $\omega_{k}$ for $k \in \mathbb{N}$ to denote the vector $\boldsymbol{\omega}$ of batch indices at iteration $k$, which is defined for Random Reshuffling by the procedure introduced in \cref{sec:randomisation_strategies}. If we consider the difference between the continuous dynamics and stochastic gradient scheme at iteration $k$, $\Delta_{k} := x_{k}-X_{kh}$, we have that
 \begin{align}\label{eq:def_EiDi}
\Delta_{k} &= \Delta_{k-1} - \int^{kh}_{(k-1)h}(\nabla f_{\omega_{k-1}}(x_{k-1}) - \nabla F(X_t ))dt\nonumber\\
&=  \Delta_{k-1} - h\underbrace{(\nabla f_{\omega_{k-1}}(x_{k-1}) - \nabla f_{\omega_{k-1}}(X_{(k-1)h}))}_{Q_{k-1}} \nonumber\\
&+ h\underbrace{\left[ \nabla F(X_{(k-1)h})- \nabla f_{\omega_{k-1}}(X_{(k-1)h})\right]}_{:= D_{k-1},\textnormal{ stochastic gradient error}} + \underbrace{\int^{kh}_{(k-1)h}(\nabla F(X_{t})-\nabla F(X_{(k-1)h}))dt}_{:= E_{k-1}, \textnormal{ local error of full gradient scheme}},
\end{align}
where for each $k\in \mathbb{N}$, $\|Q_{k}\|_{L^{2}} \leq L\|\Delta_{k}\|_{L^{2}}$ and using Taylor's theorem each $E_{k}$ can be written as 
$
    E_{k} = A^{k}_{2} + A^{k}_{3/2},
$
where 
\begin{align*}
&A^{k}_{3/2} := \sqrt{2}\int^{kh}_{(k-1)h}\nabla^{2}F(X_{(k-1)h})\int^{t}_{(k-1)h}dW_{s'}dt,\\
    &A^{k}_{2} := \int^{kh}_{(k-1)h}\int^{1}_{0}\nabla^{2}F(X_{t}+s(X_{(k-1)h} - X_{t}))ds\Bigg[-\int^{t}_{(k-1)h}\nabla F(X_{s'})ds'\\
    &+ \sqrt{2}\int^{t}_{(k-1)h}dW_{s'}\Bigg]dt - A^{k}_{3/2} ,
\end{align*}
and for any $k \in \mathbb{N}$, $\|A^{k}_{2}\|_{L^{2}} \leq h^{2}\left(L\sqrt{Ld} +2L_{1}d \right)$ and $\|A^{k}_{3/2}\|_{L^{2}} \leq h^{3/2}L\sqrt{d}$, which can be shown using \cref{assum:smoothness}\cref{assum:smoothnessA}, \cref{assum:additional_smoothness}, \cref{lemma:disc_error}, and \cite[Lemma 2]{Dalalyan2017further} as follows
\begin{align*}
    &\left\|A^{k}_{3/2}\right\|_{L^{2}} \leq \sqrt{2}L\int^{kh}_{(k-1)h}\left\|\int^{t}_{(k-1)h}dW_{s'}\right\|_{L^{2}}dt \leq h^{3/2}L\sqrt{d},\\
    &\left\|A^{k}_{2}\right\|_{L^{2}} \leq  \left\|\int^{kh}_{(k-1)h}\int^{1}_{0}\nabla^{2}F(X_{t}+s(X_{(k-1)h} - X_{t}))ds\int^{t}_{(k-1)h}\nabla F(X_{s'})ds'dt\right\|_{L^{2}}\\
    &+ \sqrt{2} \int^{kh}_{(k-1)h}\int^{1}_{0}\left\|\left[\nabla^{2}F(X_{t}+s(X_{(k-1)h} - X_{t})) - \nabla^{2}F(X_{(k-1)h})\right] \int^{t}_{(k-1)h}dW_{s'}\right\|_{L^{2}}dsdt,\\
    \intertext{{where we now use \cref{assum:smoothness}, \cite[Lemma 2]{Dalalyan2017further} and \cref{assum:additional_smoothness} successively to achieve the bound}}
    &\leq \frac{h^{2}}{2}L\sqrt{Ld} + \sqrt{2}L_{1} \int^{kh}_{(k-1)h}\left\|\frac{1}{2}\|X_{t} - X_{(k-1)h}\|_{2}\times \left\|\int^{t}_{(k-1)h}dW_{s'}\right\|_{2}\right\|_{L^{2}}dt\\
    &\leq \frac{h^{2}}{2}L\sqrt{Ld} + \frac{\sqrt{2}}{4}L_{1} \int^{kh}_{(k-1)h}\|X_{t} - X_{(k-1)h}\|^{2}_{L^{2}} +  \left\|\int^{t}_{(k-1)h}dW_{s'}\right\|^{2}_{L^{2}}dt\\
    &\leq \frac{h^{2}}{2}L\sqrt{Ld} + \frac{\sqrt{2}}{4}L_{1} \int^{kh}_{(k-1)h}\|X_{t} - X_{(k-1)h}\|^{2}_{L^{2}} +  \left\|\int^{t}_{(k-1)h}dW_{s'}\right\|^{2}_{L^{2}}dt\\
    &\leq \frac{h^{2}}{2}L\sqrt{Ld} + \frac{\sqrt{2}}{4}L_{1}\int^{h}_{0}\left[2t^2 Ld + 4td + td \right]dt\\
    &\leq h^{2}\left(L\sqrt{Ld} +2L_{1}d \right).
\end{align*}
Further, we have $\mathbb{E}[A^{i}_{3/2}A^{j}_{3/2}] = 0$, for $i \neq j$ and $i,j \in \mathbb{N}$, consequently $\|\sum^{i}_{k=1}A^{k}_{3/2}\|_{L^{2}} \leq \sqrt{i}\max_{1\leq k\leq i}\|A^{k}_{3/2}\|_{L^{2}}$. These properties are used to only lose a factor of $h^{1/2}$ when going from local error to global error in the same spirit as classical analysis for the strong convergence of numerical solutions of SDEs (see \cite{sanz2021wasserstein} and \cite{MiTrBook}). If  \cref{assum:additional_smoothness} does not hold then we can bound each $E_{k}$ by $\|E_{k}\|_{L^{2}} \leq h^{2}L\sqrt{Ld} + \sqrt{2}h^{3/2}L\sqrt{d}$ using the fact that $\nabla F$ is $L$-Lipschitz and \cref{lemma:disc_error}.
   
Iteratively applying the previous arguments we have for $r,k \in \mathbb{N}$ such that $\Delta_{r} = 0$ the following relation holds
\begin{align*}
    \Delta_{r+k} &= \sum^{r+k}_{i=r+1}[\Delta_{i}-\Delta_{i-1}] =  -\sum^{r+k-1}_{i=r}hQ_{i} + \sum^{r+k-1}_{i=r}\left(hD_{i} + E_{i}\right).
\end{align*}
We also have using that for $l\in \mathbb{N}$
\begin{align*}
    &h\left\|\sum^{R-1}_{i=0}D_{i+Rl}\right\|_{L^{2}} = \\
    &h\Bigg\|\sum^{R-1}_{i=0}\nabla f_{\omega_{i + Rl}}(X_{(i + Rl)h}) - \nabla f_{\omega_{i + Rl}}(X_{Rlh}) + \nabla F(X_{Rlh}) - \nabla F(X_{(i + Rl)h})\Bigg\|_{L^{2}},
    \end{align*}
    which, upon applying \cite[Lemma 14]{paulin2024sampling}, gives
    \begin{align*}
    &\leq h\sqrt{\frac{7}{2}R}\max_{0 \leq i\leq R-1}\Bigg\|\nabla f_{\omega_{i + Rl}}(X_{(i + Rl)h}) - \nabla f_{\omega_{i + Rl}}(X_{Rlh}) + \nabla F(X_{Rlh}) - \nabla F(X_{(i + Rl)h})\Bigg\|_{L^{2}}\\
    &\leq 2h\sqrt{\frac{7}{2}R}L\max_{0 \leq i\leq R-1}\Bigg\|X_{(i + Rl)h} - X_{Rlh}\Bigg\|_{L^{2}}\\
    &\leq 2\sqrt{\frac{7}{2}}hL\left(hR^{3/2}\sqrt{Ld} + R\sqrt{2hd}\right),
\end{align*}
where the final inequality follows from \cref{lemma:disc_error}. 
Similarly, applying \cite[Lemma 14]{paulin2024sampling} directly to $h\left\|\sum^{R-1}_{i=0}\nabla f_{\omega_{i + Rl}}(X_{(i + Rl)h}) - \nabla F(X_{(i + Rl)h})\right\|_{L^{2}}$ one has $h\left\|\sum^{R-1}_{i=0}D_{i+Rl}\right\|_{L^{2}}\leq h\sqrt{7R/2}\sigma_*$.

Using the preceding bound we have (using: a) the sum convention that, if the lower limit is less than the upper limit, the sum is empty; and b) that the maximum over zero elements is 0)
\begin{align*}
    &\|\Delta_{r+k}\|_{L^{2}} \leq hL\sum^{r+k-1}_{i=r}\left\|\Delta_{i}\right\|_{L^{2}} + \underbrace{h\left\|\sum^{r+k-1}_{i=R\lfloor (r+k-1)/R\rfloor }D_{i} \right\|_{L^{2}}}_{\textnormal{tail terms}}+ \underbrace{h\left\|\sum^{R\lfloor (r+k-1)/R\rfloor - 1}_{i=R\lceil r/R\rceil}D_{i}\right\|_{L^{2}}}_{\textnormal{fully contained epochs}}\\
    &+ \underbrace{h\left\|\sum^{R\lceil r/R\rceil -1}_{i=r}D_{i}\right\|_{L^{2}}}_{\textnormal{head terms}} + \underbrace{\left\|\sum^{r+k-1}_{i=r}E_{i}\right\|_{L^{2}}}_{\textnormal{sum of local errors}}\\
    &\leq hL\sum^{r+k-1}_{i=r}\left\|\Delta_{i}\right\|_{L^{2}} + h\left\|\sum^{r+k-1}_{i=R\lfloor (r+k-1)/R\rfloor }D_{i} \right\|_{L^{2}}+ h\left\|\sum^{R\lceil r/R\rceil -1}_{i=r}D_{k}\right\|_{L^{2}}\\
    &+h\sqrt{\lfloor (r+k-1)/R\rfloor - \lceil r/R\rceil}\max_{\lceil r/R\rceil \leq l < \lfloor (r+k-1)/R\rfloor}\left\|\sum^{(l+1)R - 1}_{i=lR}D_{i}\right\|_{L^{2}}\\
    &+ k\max_{r\leq i \leq r+k-1}\left\|A^{i}_{2}\right\|_{L^{2}} + \sqrt{k}\max_{r\leq i \leq r+k-1}\left\|A^{i}_{3/2}\right\|_{L^{2}}\\
    &\leq hL\sum^{r+k-1}_{i=r}\|\Delta_{i}\|_{L^{2}} + \underbrace{2\sqrt{\frac{7}{2}}h\sqrt{R}\sigma_{*}}_{\textnormal{bound on head and tail terms}}  + \underbrace{2\sqrt{\frac{7}{2}}h\sqrt{k}L(hR\sqrt{Ld} + \sqrt{2hRd})}_{\textnormal{bound on fully contained epochs}} \\
    &+ \underbrace{2h^2 k \left(L\sqrt{Ld} +L_{1}d\right)+ h^{3/2}L\sqrt{2kd}}_{\textnormal{bound on sum of local errors}}\\
    &\leq 2e^{khL}\Bigg(\sqrt{\frac{7}{2}}h\sqrt{R}\sigma_{*}  + \sqrt{\frac{7}{2}}h\sqrt{k}L(hR\sqrt{Ld} + \sqrt{2hRd})+ h^2 k \left(L\sqrt{Ld} +L_{1}d\right)+ h^{3/2}L\sqrt{kd}\Bigg),
\end{align*}
where in the second inequality we have used independence of the stochastic gradient noise from separate epochs. That is, for indices $i,j$ such that $D_i$ and $D_j$ belong to different epochs, the random variables corresponding to the stochastic gradient noise are independent - as a consequence, for $i>j$, $\E[D_{i}D_{j}]= \E\left[\E[D_iD_j|\omega_j]\right]=0$. Finally, if \cref{assum:additional_smoothness} does not hold one replaces the bound on the sum of local errors by $k\left(h^{2}L\sqrt{Ld} + \sqrt{2}h^{3/2}L\sqrt{d}\right)$ {, which follows from the fact that we can bound each $\|E_{i}\|_{L^{2}} \leq h^{2}L\sqrt{Ld} + \sqrt{2}h^{3/2}L\sqrt{d}$ (recalling that $E_i$ is defined in \cref{eq:def_EiDi}) using that $\nabla F$ is $L$-Lipschitz and \cref{lemma:disc_error}}.
\end{proof}

\begin{proof}[Proof of \cref{thm:SGLDRR}]
Inspired by the interpolation argument used in \cite{LePaWh24} and \cite{schuh2024convergence}, define $(x_{i})^{k}_{i=0}$ as $k \in \mathbb{N}$ steps of the stochastic gradient scheme and let $(X_{ih})^{k}_{i=0}$ be defined by \cref{eq:ODLangevin} at times $ih \geq 0$, where these are both initialised at $X_{0} = x_{0} \sim \pi_{*}$ and have synchronously coupled Brownian motion. We further define a sequence of interpolating variants $\mathbf{X}^{(l)}_{k}$ for every $l = 0,...,k$ all initialised $\mathbf{X}^{(l)}_{0} = x_{0}$, where we define $(\mathbf{X}^{(l)}_{i})^{l}_{i=1} := (X_{ih})^{l}_{i=1}$ and  $(\mathbf{X}^{(l)}_{i})^{k}_{i=l+1}$ by $k-(l+1)$ stochastic gradient integrator steps with coupled noise to $(x_{i})^{k}_{i=l+1}$. For $l = k$ we simply have just the continuous diffusion \cref{eq:ODLangevin}. Using  \cref{prop:SGULA} we split up the steps into blocks of size $\Tilde{k}$ as 
    \begin{align*}
        \|x_{k} - X_{kh}\|_{L^{2}}&\leq 
        \|\mathbf{X}^{(\lfloor k/\Tilde{k}\rfloor \Tilde{k})}_{k} - \mathbf{X}^{(k)}_{k}\|_{L^{2}} + \sum^{\lfloor k/\Tilde{k}\rfloor - 1}_{j=0}\|\mathbf{X}^{(j\Tilde{k})}_{k} - \mathbf{X}^{((j+1)\Tilde{k})}_{k}\|_{L^{2}}.
    \end{align*}
    We now use the fact that the continuous dynamics preserves the invariant measure and we have contraction of $k-l$ steps of the stochastic gradient integrator by \cref{prop:sg_contraction}. Then we have by considering each term in the previous summation 
    \begin{align*}
    &\|\mathbf{X}^{(j\Tilde{k})}_{k} - \mathbf{X}^{((j+1)\Tilde{k})}_{k}\|_{L^{2}} \leq \mathbf{A}\left(1-h\mu\right)^{k- (j+1)\Tilde{k}},
    \end{align*}
    where $\mathbf{A}$ depends on $\Tilde{k}$ as defined by the right hand side in the inequality of  \cref{prop:SGULA}. Summing up the terms we get
    \begin{align*}
     &\|x_{k} - X_{kh}\|_{L^{2}} \leq \frac{2\mathbf{A}}{1 - \left(1-h\mu\right)^{\Tilde{k}}} \leq \frac{2 \mathbf{A}}{1 - e^{-h\mu\Tilde{k}}} \leq 2\mathbf{A}\left(1 + \frac{1}{h\mu\Tilde{k}}\right).
    \end{align*}
   Let $\Tilde{k} = \left\lceil \frac{1}{hL} \right\rceil \leq \frac{2}{hL}$, then $\Tilde{k} \geq \frac{1}{hL}$ and
    \begin{align*}
    &\|x_{k} - X_{kh}\|_{L^{2}} \leq 4\mathbf{A} \frac{L}{\mu}\\
    &\leq 240h\frac{L}{\mu}\Bigg(\sqrt{R}\sigma_{*}  + (LR\sqrt{hd} + \sqrt{LRd})+\left(\sqrt{Ld} +\frac{L_{1}}{L}d\right)\Bigg),
    \end{align*}
    as required. Then combining this with \cref{prop:sg_contraction}, using a triangle inequality one can bound the 2-Wasserstein distance between $\widetilde{\pi}_{k}$ and $\pi_{*}$ by the $L^{2}$ distance between $k$ steps of the stochastic gradient scheme initialised at $\widetilde{\pi}_{0}$, $k$ steps of the stochastic gradient scheme initialised at $\pi_{*}$ and the continuous dynamics initialised at $\pi_{*}$ run for time $kh>0$, all with shared noise. 
\end{proof}

\begin{lemma}\label{lemma:disc_error}
    If $(X_{t})_{t \geq 0}$ is the solution to \cref{eq:ODLangevin} such that $X_{0} \sim \pi_{*}$. Assuming that $F$ is $L$-smooth we have for $t \geq 0$
    \[
    \|X_{t} - X_{0}\|_{L^{2}} \leq t\sqrt{Ld} + \sqrt{2td}.
    \]
\end{lemma}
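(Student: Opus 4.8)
The plan is to work from the integral (Duhamel) form of \cref{eq:ODLangevin}. Writing
$$
X_{t} - X_{0} = -\int_{0}^{t}\nabla F(X_{s})\,ds + \sqrt{2}\,W_{t},
$$
I would apply the triangle inequality for the $L^{2}$-norm to separate the drift and diffusion contributions,
$$
\|X_{t} - X_{0}\|_{L^{2}} \leq \left\|\int_{0}^{t}\nabla F(X_{s})\,ds\right\|_{L^{2}} + \sqrt{2}\,\|W_{t}\|_{L^{2}}.
$$
The target bound $t\sqrt{Ld} + \sqrt{2td}$ then arises as a drift contribution $t\sqrt{Ld}$ plus a diffusion contribution $\sqrt{2td}$, and the two terms can be estimated independently.

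The diffusion term is immediate: since $W_{t}$ is a $d$-dimensional standard Brownian motion, $\E\|W_{t}\|^{2} = td$, so $\sqrt{2}\,\|W_{t}\|_{L^{2}} = \sqrt{2td}$. For the drift term I would first pass the norm inside the integral by Minkowski's integral inequality,
$$
\left\|\int_{0}^{t}\nabla F(X_{s})\,ds\right\|_{L^{2}} \leq \int_{0}^{t}\|\nabla F(X_{s})\|_{L^{2}}\,ds.
$$
The key observation is that, because $X_{0}\sim\pi_{*}$ and $\pi_{*}$ is the invariant measure of \cref{eq:ODLangevin}, the process is stationary, so $X_{s}\sim\pi_{*}$ for every $s\geq 0$. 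Hence the integrand $\|\nabla F(X_{s})\|_{L^{2}} = (\E_{X\sim\pi_{*}}\|\nabla F(X)\|^{2})^{1/2}$ is constant in $s$, and it remains only to establish the moment bound $\E_{X\sim\pi_{*}}\|\nabla F(X)\|^{2} \leq Ld$, after which $\int_{0}^{t}\|\nabla F(X_{s})\|_{L^{2}}\,ds \leq t\sqrt{Ld}$ and the claim follows.

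The only non-routine step — and thus the main obstacle — is the moment bound $\E_{X\sim\pi_{*}}\|\nabla F(X)\|^{2}\leq Ld$. I would obtain this from the integration-by-parts identity $\E_{X\sim\pi_{*}}\|\nabla F(X)\|^{2} = \E_{X\sim\pi_{*}}[\Delta F(X)]$, which follows from $\nabla(e^{-F}) = -\nabla F\,e^{-F}$ (equivalently, from stationarity of $\pi_{*}$ applied to the generator $\mathcal{L} = -\nabla F\cdot\nabla + \Delta$ acting on $F$, since $\E_{\pi_{*}}[\mathcal{L}F]=0$). Because $L$-smoothness gives $\nabla^{2}F \preceq L I_{d}$ and hence $\Delta F = \mathrm{tr}(\nabla^{2}F) \leq Ld$ pointwise, the identity yields $\E_{X\sim\pi_{*}}\|\nabla F(X)\|^{2}\leq Ld$. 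The delicate point here is justifying the integration by parts, namely the vanishing of boundary terms and the twice-differentiability needed to write $\Delta F$; both are standard given the decay of $\pi_{*}$ ensured by strong convexity, and a mollification argument covers the case where only $L$-smoothness is assumed.
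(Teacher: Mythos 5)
Your argument is correct and follows essentially the same route as the paper: decompose $X_t-X_0$ into drift and Brownian parts, use $\E\|W_t\|^2=td$ for the diffusion, and bound the drift via stationarity ($X_s\sim\pi_*$) together with the moment bound $\E_{X\sim\pi_*}\|\nabla F(X)\|^2\le Ld$. The only difference is that the paper simply cites \cite[Lemma 2]{Dalalyan2017further} for that moment bound, whereas you reprove it via the standard integration-by-parts (generator) identity --- which is in fact the proof given in that reference.
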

\begin{proof}
We have
\begin{align*}
    \|X_{t} - X_{0}\|_{L^{2}} &= \left\|\int^{t}_{0}\nabla F(X_{s})ds + \sqrt{2}\int^{t}_{0}dW_{s}\right\|_{L^{2}}\\
    &\leq t\sqrt{Ld} + \sqrt{2td}
\end{align*}
using that $\|\nabla F(X_{s})\|_{L^{2}} \leq \sqrt{Ld}$ {which follows from \cite[Lemma 2]{Dalalyan2017further} and the fact that $X_{s} \sim \pi_{*}$ for any $s\in[0,t]$}.
\end{proof}
\newpage
\bibliographystyle{siam}

\end{document}